\newtheorem{theorem}{Theorem}[section]
\newtheorem{proposition}[theorem]{Proposition}
\newtheorem{cor}[theorem]{Corollary}
\theoremstyle{definition}
\newtheorem{definition}[theorem]{Definition}
\theoremstyle{remark}
\newtheorem{remark}[theorem]{\bf{Remark}}
\numberwithin{equation}{section}
\begin{document}

\title [Generalized Euclidean operator radius inequalities of a pair operators] {\small {Generalized Euclidean operator radius inequalities of a pair of bounded linear operators }}

\author[S. Jana ] {Suvendu Jana}

\address{$^1$ Department of Mathematics, Mahisadal Girls' College, Purba Medinipur 721628, West Bengal, India}
\email{janasuva8@gmail.com}

\renewcommand{\subjclassname}{\textup{2020} Mathematics Subject Classification}\subjclass[]{Primary 47A12, Secondary 15A60, 47A30, 47A50}
\keywords{Euclidean operator radius; Numerical radius, Operator norm, Cartesian decomposition, Hilbert-Schmidt norm, Bounded linear operator}

\maketitle

\begin{abstract}
	Let $ \mathbb{B}(\mathscr{H})$ represent the $C^*$-algebra, which consists of all bounded linear operators on $\mathscr{H},$ and let $N ( .) $ be a norm on $ \mathbb{B}(\mathscr{H})$. We define a norm  $w_{(N,e)} (. , . )$ on $ \mathbb{B}^2(\mathscr{H})$ by $$ w_{(N,e)}(B,C)=\underset{|\lambda_1|^2+\lambda_2|^2\leq1}\sup \underset{\theta\in\mathbb{R}}\sup      N\left(\Re \left(e^{i\theta}(\lambda_1B+\lambda_2C)\right)\right),$$   for every $B,C\in\mathbb{B}(\mathscr{H})$ and  $\lambda_1,\lambda_2\in\mathbb{C}.$  We investigate basic properties of this norm and prove some bounds involving it. In particular, when $N( .)$ is the Hilbert-Schmidt norm, we prove some Hilbert-Schmidt Euclidean operator radius inequalities for a pair of bounded linear operators.
\end{abstract}

\section{Introduction}

Consider a complex Hilbert space $\mathscr{H}$ with inner product $\langle \cdot,\cdot \rangle $ and the corresponding norm $\|\cdot\|$. Let $ \mathbb{B}(\mathscr{H})$ represent the $C^*$-algebra, which consists of all bounded linear operators on $\mathscr{H},$ including the identity operator $I$.  For $T\in \mathbb{B}(\mathscr{H}),$ $T^*$ denotes the adjoint of $T$. The real part and imaginary part of $T$ denoted by  $ \Re (T)$ and $\Im(T),$    are defined as $\Re(T)=\frac{1}{2}(T+T^*)$ and $\Im(T)=\frac{1}{2\rm i}(T-T^*)$ respectively. The numerical range of $T$, denoted by $W(T)$, is defined as $W(T)=\left \{\langle Tx,x \rangle: x\in \mathscr{H}, \|x\|=1 \right \}.$
We denote by $\|T\|$, $ c(T) $ and $w(T)$ the operator norm, the Crawford number and the numerical radius of $T$, respectively. Note  that $$c(T)=\inf \left \{|\langle Tx,x \rangle|: x\in \mathscr{H}, \|x\|=1 \right \}$$ and $$w(T)=\sup \left \{|\langle Tx,x \rangle|: x\in \mathscr{H}, \|x\|=1 \right \}.$$ 
It is well known that the numerical radius $ w(\cdot)$ defines a norm on $\mathbb{B}(\mathscr{H})$ and is equivalent to the operator norm $\|\cdot\|$. In fact, the following double inequality holds:
\begin{eqnarray}\label{eqv}
\frac{1}{2} \|T\|\leq w({T})\leq\|T\|.
\end{eqnarray}
The inequalities in (\ref{eqv}) are sharp. The first inequality becomes equality if $T^2=0$, and the second one turns into equality if $T$ is normal. Kittaneh \cite{E} improved the inequalities in (\ref{eqv}) by establishing that 
\begin{eqnarray}\label{k5}
\frac{1}{4}\|T^*T+T{T}^*\|\leq w^2({T})\leq\frac{1}{2}\|T^*T+T{T}^*\|.
\label{d}\end{eqnarray}
For further improvements of \eqref{eqv} and \eqref{k5} we refer the interested readers to the recent book \cite{PDMK}. 
Let $B,C\in \mathbb{B}(\mathscr{H})$, the Euclidean operator radius of $B$ and $C$, denoted by $w_e(B,C),$ is defined as $$w_e(B,C)= \sup \left \{ \sqrt{|\langle B x,x\rangle|^2+|\langle C x,x\rangle|^2} : x\in \mathscr{H}, \|x\|=1 \right \}.$$ 
Following \cite{P}, $w_e(.\,,\,.): \mathbb{B}^2(\mathscr{H})\to [0,\infty]$ is a norm that satisfies the inequality
 \begin{eqnarray}
\frac{\sqrt{2}}{4}\|B^*B+C^*C\|^{\frac{1}{2}}\leq w_e(B,C)\leq\|B^*B+C^*C\|^{\frac{1}{2}}.
\label{eqn1}\end{eqnarray}
The constants $\frac{\sqrt{2}}{4}$ and $1$ are best possible in \eqref{eqn1}.
 If $B$ and $C$ are self-adjoint operators, then $(\ref{eqn1})$ becomes 
 \begin{eqnarray}
\frac{\sqrt{2}}{4}\|B^2+C^2\|^{\frac{1}{2}}\leq w_e(B,C)\leq\|B^2+C^2\|^{\frac{1}{2}}.
\label{eqn2}\end{eqnarray}
We note that for self-adjoint operators $B$ and $C$, $w_e(B,C)=w(B+i C),$ its proof follows easily from the definition of $w_e(B,C)$.
In \cite[Th. 1]{D}, Dragomir proved that if  $B,C\in \mathbb{B}(\mathscr{H})$, then
\begin{eqnarray}\label{D06}
\frac12 w(B^2+C^2)	\leq w^2_e(B,C)\leq\|B^*B+C^*C\|,
\end{eqnarray}
 where the constant $\frac12$ is best possible in the sense that it cannot be replaced by a larger constant.  For further extension of Euclidean operator radius and related inequalities we refer to \cite{s4,s2,s3,s1,SMS, SAH21}.
In \cite{STU07}, Yamazaki gave an important and useful identity for $w(T)$, 
$$ w(T)=\underset{\theta\in\mathbb{R}}\sup \|\Re(e^{i\theta}T)\|.$$
Motivated by the above characterization, Abu-Omar and Kittaneh \cite{AK2019} generalized the usual numerical radius as follows:
\begin{eqnarray}\label{eqn11}
 w_N(T)=\underset{\theta\in\mathbb{R}}\sup      N(\Re(e^{i\theta}T)),\end{eqnarray} where $N(.)$ is a norm on $\mathbb{B}(\mathscr{H})$.\\
They proved that $w_N(.)$ defines a norm $\mathbb{B}(\mathscr{H})$ and $w_N(.)$ is self adjoint i.e, $w_N(T)=w_N(T^*)$. Also proved that $w_N(T)\geq\frac12 N(T)$ and $w_N(T)\leq N(T)$, if $N(T^*)=N(T)$, for every $T\in\mathbb{B}(\mathscr{H})$.\\
Popescu \cite[Section 2, Corollary 2.3]{P}, gave a  characterization for Euclidean operator radius as follows: $$ w_e(B,C)= \underset{|\lambda_1|^2+\lambda_2|^2\leq1}\sup w(\lambda_1B+\lambda_2C),$$ where $\lambda_1,\lambda_2\in\mathbb{C}.$\\
For $2$-tuple operators $B=(B_1,B_2),$ $C=(C_1,C_2)\in\mathbb{B}^2(\mathscr{H})$, we write $B+C=(B_1+C_1, B_2+C_2),$ $ BC=(B_1C_1, B_2C_2),$ and $\alpha B=(\alpha B_1,\alpha C_1)$, for any scaler $\alpha\in\mathbb{C}.$\\

 In Section 2,  inspired by \cite{AK2019}, for an arbitrary norm $N( . )$ on $\mathbb{B}(\mathscr{H})$, we define the $w_{(N,e)}(.,.)$ as a generalization of the Euclidean operators radius of a pair of operator and investigate basic properties of this norm and prove inequalities involving it. In section 3,  we prove some bounds for Hilbert-Schmidt Euclidean operators radius for pair of bonded linear operators when $N ( . )$ is the Hilbert-Schmidt norm $\| . \|_2$.

\section{A generalization of Euclidean operator radius}
In this section, we introduce our new norm on $\mathbb{B}^2(\mathscr{H})$, which generalizes the
Euclidean operator radius and present basic properties of this norm. 

\begin{definition}\label{def1}
Let $N ( . )$ be a norm on $\mathbb{B}(\mathscr{H})$. The function $w_{(N,e)}( . , . ):\mathbb{B}^2(\mathscr{H})\rightarrow [0,\infty)$ is defined as: $$ w_{(N,e)}(B,C)=\underset{|\lambda_1|^2+\lambda_2|^2\leq1}\sup \underset{\theta\in\mathbb{R}}\sup     N\left(\Re \left(e^{i\theta}(\lambda_1B+\lambda_2C)\right)\right),$$ for every $B,C\in\mathbb{B}(\mathscr{H})$ and  $\lambda_1,\lambda_2\in\mathbb{C}.$ 

\end{definition}
 
In our next theorem, we prove that $w_{(N,e)}( . , . )$ is a norm on $\mathbb{B}^2(\mathscr{H})$ . We use
some ideas of \cite[Theorems 1]{AK2019}.

\begin{theorem}\label{th1}  $w_{(N,e)}(.  ,  . )$ is a norm on $\mathbb{B}^2(\mathscr{H})$.
	
\end{theorem}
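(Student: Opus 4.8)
The plan is to verify the three norm axioms for $w_{(N,e)}(\cdot,\cdot)$: non-negativity with the definiteness condition, absolute homogeneity, and the triangle inequality. Throughout I would exploit the fact that $N(\cdot)$ is already known to be a norm on $\mathbb{B}(\mathscr{H})$ and that, for fixed $\lambda_1,\lambda_2$, the inner supremum $\sup_{\theta}N(\Re(e^{i\theta}(\lambda_1B+\lambda_2C)))$ is exactly the generalized numerical radius $w_N(\lambda_1B+\lambda_2C)$ from \eqref{eqn11}, which Abu-Omar and Kittaneh already proved to be a norm. This reframing is the key simplification: writing $w_{(N,e)}(B,C)=\sup_{|\lambda_1|^2+|\lambda_2|^2\le 1} w_N(\lambda_1 B+\lambda_2 C)$ lets me inherit the norm properties of $w_N$ rather than manipulating the double supremum directly.

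First I would establish non-negativity and definiteness. Non-negativity is immediate since $N\ge 0$. For definiteness, if $w_{(N,e)}(B,C)=0$ then $w_N(\lambda_1 B+\lambda_2 C)=0$ for every admissible pair, and taking $(\lambda_1,\lambda_2)=(1,0)$ and $(0,1)$ forces $w_N(B)=w_N(C)=0$, hence $B=C=0$ because $w_N$ is a norm. Next I would handle homogeneity: for a scalar $\alpha\in\mathbb{C}$, $w_{(N,e)}(\alpha B,\alpha C)=\sup_{|\lambda_1|^2+|\lambda_2|^2\le 1} w_N(\alpha(\lambda_1 B+\lambda_2 C))=|\alpha|\,w_{(N,e)}(B,C)$, using homogeneity of $w_N$ and pulling $|\alpha|$ out of the supremum.

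The triangle inequality is the main point requiring care. Given pairs $(B_1,B_2)$ and $(C_1,C_2)$, I would fix any admissible $(\lambda_1,\lambda_2)$ and write $\lambda_1(B_1+C_1)+\lambda_2(B_2+C_2)=(\lambda_1 B_1+\lambda_2 B_2)+(\lambda_1 C_1+\lambda_2 C_2)$, then apply the triangle inequality for the norm $w_N$ to get
\begin{eqnarray*}
w_N\big(\lambda_1(B_1+C_1)+\lambda_2(B_2+C_2)\big)\le w_N(\lambda_1 B_1+\lambda_2 B_2)+w_N(\lambda_1 C_1+\lambda_2 C_2).
\end{eqnarray*}
Each term on the right is bounded by the corresponding supremum over all admissible pairs, so the right-hand side is at most $w_{(N,e)}(B_1,B_2)+w_{(N,e)}(C_1,C_2)$; taking the supremum over $(\lambda_1,\lambda_2)$ on the left yields the triangle inequality. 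The only genuine obstacle is justifying that the inner supremum equals $w_N(\lambda_1 B+\lambda_2 C)$ and that $w_N$ is indeed a norm — but both are supplied by the excerpt's summary of \cite{AK2019}, so the argument reduces to routine manipulation of suprema. I would therefore present the proof by first recording the identity $w_{(N,e)}(B,C)=\sup_{|\lambda_1|^2+|\lambda_2|^2\le 1} w_N(\lambda_1 B+\lambda_2 C)$ and then verifying each axiom in turn.
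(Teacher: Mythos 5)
Your proposal is correct, but it is organized differently from the paper's own argument. The paper proves Theorem \ref{th1} by working directly with the double supremum $\sup_{|\lambda_1|^2+|\lambda_2|^2\le 1}\sup_{\theta} N(\Re(e^{i\theta}(\lambda_1B+\lambda_2C)))$: definiteness is obtained by choosing $\theta=0$ and $\theta=\pi/2$ with $(\lambda_1,\lambda_2)=(1,0)$ to force $N(\Re B)=N(\Im B)=0$, homogeneity is handled by writing $\alpha=|\alpha|e^{i\phi}$ and absorbing the phase $\phi$ into the variable $\theta$, and the triangle inequality follows from subadditivity of $N$ and of suprema --- in effect the paper re-derives, inline, the relevant properties of $w_N$ (it explicitly says it borrows ideas from the proof of \cite[Theorem 1]{AK2019}). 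You instead collapse the inner supremum at the outset via the identity $w_{(N,e)}(B,C)=\sup_{|\lambda_1|^2+|\lambda_2|^2\le 1} w_N(\lambda_1B+\lambda_2C)$ and then treat $w_N$ as a black-box norm, citing \cite{AK2019}; this makes definiteness, homogeneity (no phase-shift trick needed), and the triangle inequality one-line consequences of the corresponding properties of $w_N$. Your reduction is legitimate --- the identity is immediate from \eqref{eqn11}, and indeed the paper itself records it later as \eqref{eqn12} in Proposition \ref{prop1} --- so what your route buys is modularity and brevity at the cost of being non-self-contained, while the paper's route buys a self-contained proof that does not lean on the external theorem that $w_N$ is a norm. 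The only cosmetic omission on your side is finiteness of the supremum (needed for $w_{(N,e)}$ to map into $[0,\infty)$), which follows from $w_N(\lambda_1B+\lambda_2C)\le w_N(B)+w_N(C)$; the paper does not address this either.
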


\begin{proof}
Let $B,C\in \mathbb{B}(\mathscr{H})$. Since $N(.)$ is a norm on $\mathbb{B}(\mathscr{H})$, we have \\ $ N\left(\Re \left(e^{i\theta}(\lambda_1B+\lambda_2C)\right)\right)\geq 0$ for every $\theta\in\mathbb{R}$. Hence $\underset{|\lambda_1|^2+\lambda_2|^2\leq1}\sup \underset{\theta\in\mathbb{R}}\sup     N\left(\Re \left(e^{i\theta}(\lambda_1B+\lambda_2C)\right)\right)\geq0$. So  $w_{(N,e)}( B,C )\geq 0.$\\
Let us assume that $w_{(N,e)}( B,C )= 0$. Then $ N\left(\Re \left(e^{i\theta}(\lambda_1B+\lambda_2C)\right)\right)=0$, for all $\theta\in\mathbb{R}$ and  $\lambda_1,\lambda_2\in\mathbb{C}$ with $|\lambda_1|^2+\lambda_2|^2\leq1.$ Taking $\theta =0$ , $\theta=\frac{\pi}{2}$ , and  $\lambda_1=1$,  $\lambda_2=0$, we get $N(\Re B)=0=N(\Im B).$ So $B=0$. In Similar way, we obviously get $C=0$. \\
Let $\alpha \in\mathbb{C}$. There exists $\phi \in \mathbb{R}$ such that $\alpha = |\alpha|e^{i\phi}$. Hence \begin{eqnarray*}
w_{(N,e)}( \alpha B,\alpha C )&=&\underset{|\lambda_1|^2+\lambda_2|^2\leq1}\sup \underset{\theta\in\mathbb{R}}\sup      N(\Re( e^{i\theta}(\lambda_1\alpha B+\lambda_2 \alpha C)))\\
&=&\underset{|\lambda_1|^2+\lambda_2|^2\leq1}\sup \underset{\theta\in\mathbb{R}}\sup      N(\Re( e^{i(\theta+\phi)}(\lambda_1|\alpha| B+\lambda_2 |\alpha| C)))\\
&=&\underset{|\lambda_1|^2+\lambda_2|^2\leq1}\sup \underset{t  \in\mathbb{R}}\sup      N(\Re (e^{it}(\lambda_1|\alpha| B+\lambda_2 |\alpha| C)))\\
&=&|\alpha|\underset{|\lambda_1|^2+\lambda_2|^2\leq1}\sup \underset{t  \in\mathbb{R}}\sup      N(\Re( e^{it}(\lambda_1 B+\lambda_2  C)))\\
&=&|\alpha|w_{(N,e)}(  B, C ).
\end{eqnarray*} 
Let $B_1,C_1,B_2,C_2\in\mathbb{B}(\mathscr{H}).$\\
\begin{eqnarray*}
&&w_{(N,e)}(  B_1+B_2, C_1+C_2 )\\&=&\underset{|\lambda_1|^2+\lambda_2|^2\leq1}\sup \underset{\theta\in\mathbb{R}}\sup      N(\Re( e^{i\theta}(\lambda_1( B_1+B_2)+\lambda_2 ( C_1+C_2))))\\
&=&\underset{|\lambda_1|^2+\lambda_2|^2\leq1}\sup \underset{\theta\in\mathbb{R}}\sup      N(\Re( e^{i\theta}((\lambda_1 B_1+\lambda_2  C_1)+(\lambda_1 B_2+\lambda_2  C_2))))\\
&\leq & \underset{|\lambda_1|^2+\lambda_2|^2\leq1}\sup \underset{\theta\in\mathbb{R}}\sup      N(\Re( e^{i\theta}(\lambda_1 B_1+\lambda_2  C_1)))\\
&&+\underset{|\lambda_1|^2+\lambda_2|^2\leq1}\sup \underset{\theta\in\mathbb{R}}\sup      N(\Re( e^{i\theta}(\lambda_1 B_2+\lambda_2  C_2)))\\
&=& w_{(N,e)}(  B_1, C_1 )+w_{(N,e)}(  B_2, C_2 ).
\end{eqnarray*}
 Thus $ w_{N,e}( . , . ) $ is sub additive and so $ w_{(N,e)}( . , . ) $  is a norm on $\mathbb{B}^2(\mathscr{H})$.
\end{proof}

In the next result we proof some properties of the norm $w_{(N,e)}(. , .)$.

\begin{proposition}\label{prop1}
Let  $ B,C\in\mathbb{B}(\mathscr{H}).$  Then \\
(a) $ w_{(N,e)}(B ,C )=\frac{1}{\sqrt{2}} w_{(N,e)}(B+C ,B-C ).$\\
 (b)$w_{(N,e)}(\Re B, \Im B)=\frac{1}{\sqrt{2}}w_{(N,e)}(B,B^*)=w_N(B).$\\
 (c)$w_{(N,e)}(B,B)=\sqrt{2}w_N(B).$\\
(d) The norm $w_{(N,e)}( . , . )$ is self adjoint.\\
(e) If the norm $N( . )$ is weakly unitarily invariant, then so is $w_{(N,e)}( . , . )$.\\
(f)  $w_{(N,e)}( B,C )=\underset{|\lambda_1|^2+\lambda_2|^2\leq1}\sup \underset{\alpha^2+\beta^2=1}{\underset{\alpha,\beta\in\mathbb{R}}\sup} N(\alpha\Re (\lambda_1B+\lambda_2C)+\beta\Im(\lambda_1B+\lambda_2C)).$ 
\end{proposition}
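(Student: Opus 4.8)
The plan is to reduce all six items to two elementary facts about the inner functional and then handle them in three groups. Writing, as in \eqref{eqn11}, $w_N(X)=\sup_{\theta\in\mathbb{R}}N(\Re(e^{i\theta}X))$ for $X\in\mathbb{B}(\mathscr{H})$, Definition \ref{def1} becomes
\begin{equation*}
w_{(N,e)}(B,C)=\sup_{|\lambda_1|^2+|\lambda_2|^2\le 1} w_N(\lambda_1 B+\lambda_2 C).
\end{equation*}
I would use two observations throughout. First, the map $(\lambda_1,\lambda_2)\mapsto w_N(\lambda_1 B+\lambda_2 C)$ is positively homogeneous of degree $1$, so taking the supremum over the disc $|\lambda_1|^2+|\lambda_2|^2\le r^2$ in $\mathbb{C}^2$ produces exactly the factor $r$ times the supremum over the unit disc. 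Second, from $\Re(cX^*)=\Re(\bar c X)$ for $c\in\mathbb{C}$ one gets the algebraic identity
\begin{equation*}
\Re\big(e^{i\theta}(\mu_1 X+\mu_2 X^*)\big)=\Re\big((e^{i\theta}\mu_1+e^{-i\theta}\overline{\mu_2})X\big).
\end{equation*}

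For (a), the first equality of (b), and (c) I would perform an invertible linear change of variables in $(\lambda_1,\lambda_2)$ that rescales the constraint disc, then extract the scalar factor by homogeneity. For (a), setting $\lambda_1=\mu_1+\mu_2$, $\lambda_2=\mu_1-\mu_2$ rewrites $\mu_1(B+C)+\mu_2(B-C)=\lambda_1 B+\lambda_2 C$, and since $|\lambda_1|^2+|\lambda_2|^2=2(|\mu_1|^2+|\mu_2|^2)$ the constraint $|\mu_1|^2+|\mu_2|^2\le1$ becomes $|\lambda_1|^2+|\lambda_2|^2\le 2$, producing the factor $\sqrt2$. For the first equality of (b) I apply the identity above with $X=B$, $\mu_1=\tfrac{\lambda_1-i\lambda_2}{2}$, $\mu_2=\tfrac{\lambda_1+i\lambda_2}{2}$, so that $\lambda_1\Re B+\lambda_2\Im B=\mu_1 B+\mu_2 B^*$ with $|\mu_1|^2+|\mu_2|^2=\tfrac12(|\lambda_1|^2+|\lambda_2|^2)$, giving the factor $\tfrac1{\sqrt2}$. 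For (c) I set $s=\lambda_1+\lambda_2$, use $w_N(sB)=|s|\,w_N(B)$, and reduce to $\sup\{|\lambda_1+\lambda_2|:|\lambda_1|^2+|\lambda_2|^2\le1\}=\sqrt2$, which is Cauchy--Schwarz with equality at $\lambda_1=\lambda_2=\tfrac1{\sqrt2}$.

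The second equality of (b), $\tfrac1{\sqrt2}w_{(N,e)}(B,B^*)=w_N(B)$, is where I expect the main obstacle, since it cannot be obtained by a change of variables alone and requires matching an upper bound with an explicit optimizer. Using the identity with $X=B$ and $a=e^{i\theta}\mu_1+e^{-i\theta}\overline{\mu_2}$, each term equals $N(\Re(aB))=|a|\,N(\Re(e^{i\arg a}B))\le|a|\,w_N(B)$; as $|a|\le|\mu_1|+|\mu_2|\le\sqrt2\sqrt{|\mu_1|^2+|\mu_2|^2}\le\sqrt2$, this gives $w_{(N,e)}(B,B^*)\le\sqrt2\,w_N(B)$. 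For the reverse inequality I must exhibit admissible parameters attaining the bound: with $\theta=0$, $\mu_1=\tfrac1{\sqrt2}e^{i\theta_0}$, $\mu_2=\tfrac1{\sqrt2}e^{-i\theta_0}$ one gets $a=\sqrt2\,e^{i\theta_0}$, whence $N(\Re(e^{i\theta}(\mu_1 B+\mu_2 B^*)))=\sqrt2\,N(\Re(e^{i\theta_0}B))$, and taking the supremum over $\theta_0$ yields $w_{(N,e)}(B,B^*)\ge\sqrt2\,w_N(B)$.

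Finally, (d), (e), (f) are invariance and rewriting statements. For (d) (interpreted as $w_{(N,e)}(B^*,C^*)=w_{(N,e)}(B,C)$) I would use $\Re(e^{i\theta}S^*)=\Re(e^{-i\theta}S)$ with $S=\overline{\lambda_1}B+\overline{\lambda_2}C$, so that $\Re(e^{i\theta}(\lambda_1 B^*+\lambda_2 C^*))=\Re(e^{-i\theta}(\overline{\lambda_1}B+\overline{\lambda_2}C))$; reindexing $\theta\mapsto-\theta$ and $(\lambda_1,\lambda_2)\mapsto(\overline{\lambda_1},\overline{\lambda_2})$, both of which fix the admissible regions, identifies the two suprema. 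For (e) I use $\Re(U^*XU)=U^*\Re(X)U$ for unitary $U$, giving $\Re(e^{i\theta}(\lambda_1 U^*BU+\lambda_2 U^*CU))=U^*\Re(e^{i\theta}(\lambda_1 B+\lambda_2 C))U$, and weak unitary invariance of $N$ passes the equality through the suprema. For (f) I expand $\Re(e^{i\theta}X)=\cos\theta\,\Re X-\sin\theta\,\Im X$ with $X=\lambda_1 B+\lambda_2 C$; putting $\alpha=\cos\theta$, $\beta=-\sin\theta$ shows the inner supremum over $\theta\in\mathbb{R}$ coincides with the supremum over $\alpha^2+\beta^2=1$, which is precisely the claimed formula.
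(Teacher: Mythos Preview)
Your proposal is correct and follows essentially the same route as the paper: both reduce to the identity $w_{(N,e)}(B,C)=\sup_{|\lambda_1|^2+|\lambda_2|^2\le1}w_N(\lambda_1B+\lambda_2C)$, then handle (a), the first equality of (b), and (c) via a linear change of variables plus homogeneity, treat (d), (e), (f) by the obvious reindexing and the expansion $\Re(e^{i\theta}X)=\cos\theta\,\Re X-\sin\theta\,\Im X$, and prove the second equality of (b) by a $\sqrt{2}$ upper bound together with an explicit optimizer. The only cosmetic difference is that for this last step you combine the two pieces into $\Re(aB)$ with $a=e^{i\theta}\mu_1+e^{-i\theta}\overline{\mu_2}$ and bound $|a|$, whereas the paper keeps the two summands separate and applies the triangle inequality before taking suprema; the resulting bound and the matching lower bound are identical.
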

 \begin{proof}
 $(a)$
 It follows from Definition \ref{def1} and (\ref{eqn11}) we have \begin{eqnarray}\label{eqn12}
  w_{(N,e)}(B ,C )=\underset{|\lambda_1|^2+\lambda_2|^2\leq1}\sup w_N(\lambda_1B+\lambda_2C).
 \end{eqnarray}
 It follows form (\ref{eqn12}), \begin{eqnarray*}
  w_{(N,e)}(B+C ,B-C)&=& \underset{|\lambda_1|^2+\lambda_2|^2\leq1}\sup
 w_N(B(\lambda_1+\lambda_2)+C(\lambda_1-\lambda_2))\\&=&\sqrt{2}\underset{|\lambda_1|^2+\lambda_2|^2\leq1}\sup w_N(\lambda_1B+\lambda_2C).\\
 &=&\sqrt{2} w_{(N,e)}(B ,C ).
 \end{eqnarray*}
 $(b)$ Again from (\ref{eqn12}), we have 
 \begin{eqnarray*}
  w_{(N,e)}(\Re B ,\Im B )&=&\underset{|\lambda_1|^2+\lambda_2|^2\leq1}\sup w_N(\lambda_1 \Re B+\lambda_2 \Im B )\\
&=&\underset{|\lambda_1|^2+\lambda_2|^2\leq1}\sup \frac12 w_N(\lambda_1(B+B^*)+\lambda_2(-iB+iB^*))\\
&=&\underset{|\lambda_1|^2+\lambda_2|^2\leq1}\sup \frac12 w_N(B(\lambda_1-i\lambda_2)+B^*(\lambda_1+i\lambda_2)\\
&=&\underset{|\lambda_1|^2+\lambda_2|^2\leq1}\sup \frac{1}{\sqrt{2}} w_N(\lambda_1 B+ \lambda_2 B^*)\\
&=&\frac{1}{\sqrt{2}}w_{(N,e)}(B,B^*).
  \end{eqnarray*}
  Now, \begin{eqnarray}
  w_{(N,e)}(B,B^*)&=&\underset{|\lambda_1|^2+\lambda_2|^2\leq1}\sup \underset{\theta\in\mathbb{R}}\sup      N(\Re( e^{i\theta}(\lambda_1B+\lambda_2B^*)))\nonumber\\
  &=&\underset{|\lambda_1|^2+\lambda_2|^2\leq1}\sup \underset{\theta\in\mathbb{R}}\sup      N\left(\Re( e^{i\theta}(\lambda_1B))+\Re (e^{i\theta}(\bar{\lambda_2}B))\right)
 \label{eqn13} \end{eqnarray}
 Now,
 \begin{eqnarray*}
 &&\underset{|\lambda_1|^2+\lambda_2|^2\leq1}\sup \underset{\theta\in\mathbb{R}}\sup      N\left(\Re( e^{i\theta}(\lambda_1B))+\Re (e^{i\theta}(\bar{\lambda_2}B))\right)\\
&\leq& \underset{|\lambda_1|^2+\lambda_2|^2\leq1}\sup\left\lbrace\underset{\theta\in\mathbb{R}}\sup      N\left(\Re( e^{i\theta}(\lambda_1B))\right)+\underset{\theta\in\mathbb{R}}\sup      N\left(\Re( e^{i\theta}(\bar{\lambda_2}B))\right)\right\rbrace\\
 &=&\underset{|\lambda_1|^2+\lambda_2|^2\leq1}\sup\left\lbrace w_N(\lambda_1B)+w_N(\bar{\lambda_2}B)\right\rbrace\\
 &=&\underset{|\lambda_1|^2+\lambda_2|^2\leq1}\sup\left\lbrace |\lambda_1|+|\lambda_2|\right\rbrace w_N(B)\\
 &=&\sqrt{2}w_N(B).
 \end{eqnarray*}
 It follows from (\ref{eqn13}) that 
 \begin{eqnarray*}
  w_{(N,e)}(B,B^*)&\geq&\frac{1}{\sqrt{2}}\underset{\theta\in\mathbb{R}}\sup      N\left(\Re( e^{i\theta}B)+\Re (e^{i\theta}B)\right)\\
  &=&\sqrt{2}w_N(B).
 \end{eqnarray*}
 Therefore $w_{(N,e)}(B,C)=\sqrt{2}w_N(B).$ 
 
(c) \begin{eqnarray*}
w_{(N,e)}(B,B)&=&\underset{|\lambda_1|^2+\lambda_2|^2\leq1}\sup \underset{\theta\in\mathbb{R}}\sup      N(\Re( e^{i\theta}(\lambda_1B+\lambda_2B)))\\
&=&\underset{|\lambda_1|^2+\lambda_2|^2\leq1}\sup \underset{\theta\in\mathbb{R}}\sup      N(\Re( e^{i\theta}((\lambda_1+\lambda_2)B))\\
&=&\underset{|\lambda_1|^2+\lambda_2|^2\leq1}\sup w_N((\lambda_1+\lambda_2)B)\\
&=&\underset{|\lambda_1|^2+\lambda_2|^2\leq1}\sup|(\lambda_1+\lambda_2)|w_N(B)\\
&\leq&\underset{|\lambda_1|^2+\lambda_2|^2\leq1}\sup(|\lambda_1|+|\lambda_2|)|w_N(B)\\
&=&\sqrt{2}w_N(B).
\end{eqnarray*} 
 Also $w_{(N,e)}(B,B)\geq\sqrt{2}w_N(B),$ as $w_{(N,e)}(B,B)=\underset{|\lambda_1|^2+\lambda_2|^2\leq1}\sup|(\lambda_1+\lambda_2)|w_N(B).$\\
 Hence the result is proved.\\
 (d) \begin{eqnarray*}
 w_{(N,e)}(B^*,C^*)&=&\underset{|\lambda_1|^2+\lambda_2|^2\leq1}\sup \underset{\theta\in\mathbb{R}}\sup      N(\Re( e^{i\theta}(\lambda_1B^*+\lambda_2C^*)))\\
 &=&\underset{|\lambda_1|^2+\lambda_2|^2\leq1}\sup \underset{\theta\in\mathbb{R}}\sup      N(\Re( e^{-i\theta}(\bar{\lambda_1}B+\bar{\lambda_2}C)))\\
 &=& w_{(N,e)}(B,C).
 \end{eqnarray*}
 (e) Assume that $N(. )$ is weakly unitarily invariant and let $U\in\mathbb{B}(\mathscr{H})$ be unitary. Then\begin{eqnarray*}
 w_{(N,e)}(U^*(B,C)U)&=&\underset{|\lambda_1|^2+\lambda_2|^2\leq1}\sup \underset{\theta\in\mathbb{R}}\sup      N(\Re( e^{i\theta}(U^*(\lambda_1B+\lambda_2C)U)))\\
 &=&\underset{|\lambda_1|^2+\lambda_2|^2\leq1}\sup \underset{\theta\in\mathbb{R}}\sup  N(U^*\Re( e^{i\theta}(\lambda_1B+\lambda_2C))U).  
 \end{eqnarray*}
 By the consideration we have $ N(U^*\Re( e^{i\theta}(\lambda_1B+\lambda_2C))U)= N(\Re( e^{i\theta}(\lambda_1B+\lambda_2C)))$.\\
 Therefore \begin{eqnarray*}
 w_{(N,e)}(U^*(B,C)U)&=&\underset{|\lambda_1|^2+\lambda_2|^2\leq1}\sup \underset{\theta\in\mathbb{R}}\sup N(\Re( e^{i\theta}(\lambda_1B+\lambda_2C)))\\
 &=& w_{(N,e)}(B,C).
 \end{eqnarray*}
 (f) \begin{eqnarray*}
 w_{(N,e)}(B,C)&=&\underset{|\lambda_1|^2+\lambda_2|^2\leq1}\sup \underset{\theta\in\mathbb{R}}\sup      N(\Re( e^{i\theta}(\lambda_1B+\lambda_2C)))\\
&=&\underset{|\lambda_1|^2+\lambda_2|^2\leq1}\sup\underset{\theta\in\mathbb{R}}\sup \frac12     N\left( e^{i\theta}(\lambda_1B+\lambda_2 C)+e^{-i\theta}(\bar{\lambda_1}B^*+\bar{\lambda_2} C^*)\right) \\
&=&\underset{|\lambda_1|^2+\lambda_2|^2\leq1}\sup\underset{\theta\in\mathbb{R}}\sup      N\left( cos\theta\Re(\lambda_1B+\lambda_2 C)-sin\theta\Im (\lambda_1B+\lambda_2 C)\right).   
\end{eqnarray*}
Put $\alpha=cos\theta$ and $\beta=-sin\theta$ we have 
\begin{eqnarray*}
 w_{(N,e)}(B,C)&=&\underset{|\lambda_1|^2+\lambda_2|^2\leq1}\sup\underset{\theta\in\mathbb{R}}\sup      N\left( cos\theta\Re(\lambda_1B+\lambda_2 C)-sin\theta\Im (\lambda_1B+\lambda_2 C)\right)\\
 &=&\underset{|\lambda_1|^2+\lambda_2|^2\leq1}\sup \underset{\alpha^2+\beta^2=1}{\underset{\alpha,\beta\in\mathbb{R}}\sup} N(\alpha\Re (\lambda_1B+\lambda_2C)+\beta\Im((\lambda_1B+\lambda_2C))
\end{eqnarray*}
  Hence the results are proved.
 
 \end{proof}

Next we proof some bounds for generalized Euclidean operator radius of a pair of bounded linear operator.
  \begin{theorem}\label{thh11}
  Let  $ B,C\in\mathbb{B}(\mathscr{H}).$ then 

  (a) $ w_{(N,e)}(B ,C )\geq\max\left\lbrace w_N(B), w_N(C)\right\rbrace. $\\
  (b)  $ w_{(N,e)}(B ,C )\leq\sqrt{w_N^2(B)+w_N^2(C)}.$
 
  \end{theorem}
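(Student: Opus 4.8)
The plan is to reduce both inequalities to the reformulation \eqref{eqn12}, namely
\[
w_{(N,e)}(B,C)=\sup_{|\lambda_1|^2+|\lambda_2|^2\le1} w_N(\lambda_1 B+\lambda_2 C),
\]
which was already obtained in the proof of Proposition \ref{prop1}(a), together with the fact established in \cite{AK2019} that $w_N(\cdot)$ is a norm on $\mathbb{B}(\mathscr{H})$, so that it is absolutely homogeneous and subadditive.

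For part (a), I would simply specialize the supremum to two admissible choices of the weights. Taking $(\lambda_1,\lambda_2)=(1,0)$, which satisfies $|\lambda_1|^2+|\lambda_2|^2=1\le1$, gives $w_{(N,e)}(B,C)\ge w_N(B)$; taking $(\lambda_1,\lambda_2)=(0,1)$ gives $w_{(N,e)}(B,C)\ge w_N(C)$. Combining the two yields the claimed lower bound by $\max\{w_N(B),w_N(C)\}$.

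For part (b), I would bound the quantity inside the supremum uniformly in the weights. Fix $\lambda_1,\lambda_2\in\mathbb{C}$ with $|\lambda_1|^2+|\lambda_2|^2\le1$. By subadditivity and absolute homogeneity of the norm $w_N$,
\[
w_N(\lambda_1 B+\lambda_2 C)\le w_N(\lambda_1 B)+w_N(\lambda_2 C)=|\lambda_1|\,w_N(B)+|\lambda_2|\,w_N(C).
\]
Applying the Cauchy--Schwarz inequality in $\mathbb{R}^2$ to the right-hand side and then invoking the constraint gives
\[
|\lambda_1|\,w_N(B)+|\lambda_2|\,w_N(C)\le\sqrt{|\lambda_1|^2+|\lambda_2|^2}\,\sqrt{w_N^2(B)+w_N^2(C)}\le\sqrt{w_N^2(B)+w_N^2(C)}.
\]
Since this bound is independent of $\lambda_1,\lambda_2$, taking the supremum over the admissible region yields $w_{(N,e)}(B,C)\le\sqrt{w_N^2(B)+w_N^2(C)}$.

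I do not anticipate a genuine obstacle, as both inequalities follow formally from \eqref{eqn12}. The only point requiring minor care is the complex absolute homogeneity $w_N(\lambda T)=|\lambda|\,w_N(T)$ for $\lambda\in\mathbb{C}$, which holds precisely because $w_N$ is a norm on the complex vector space $\mathbb{B}(\mathscr{H})$; this is exactly what converts the complex weights $\lambda_1,\lambda_2$ into the nonnegative scalars $|\lambda_1|,|\lambda_2|$ on which the Cauchy--Schwarz step operates.
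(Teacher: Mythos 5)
Your proposal is correct and follows essentially the same route as the paper: part (a) by specializing $(\lambda_1,\lambda_2)$ to $(1,0)$ and $(0,1)$, and part (b) by subadditivity and homogeneity of $w_N$ followed by the Cauchy--Schwarz bound $|\lambda_1|w_N(B)+|\lambda_2|w_N(C)\le\sqrt{w_N^2(B)+w_N^2(C)}$. The only cosmetic difference is that the paper re-derives the subadditivity step inline from the triangle inequality for $N$ and the splitting of the supremum over $\theta$, whereas you invoke the fact from \cite{AK2019} that $w_N$ is a norm; the mathematical content is identical.
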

  \begin{proof}
  We have $$ w_{(N,e)}(B,C)=\underset{|\lambda_1|^2+\lambda_2|^2\leq1}\sup \underset{\theta\in\mathbb{R}}\sup      N\left(\Re \left(e^{i\theta}(\lambda_1B+\lambda_2C)\right)\right),$$ 
  for every   $\lambda_1,\lambda_2\in\mathbb{C}.$ If we take $\lambda_1=1$ and $\lambda_2=0$ we have that 
  $$ w_{(N,e)}(B,C)\geq \underset{\theta\in\mathbb{R}}\sup      N(\Re (e^{i\theta}B))=w_N(B).$$ 
  In similar way, if we take $\lambda_1=0$ and $\lambda_2=1$ we get  $$ w_{(N,e)}(B,C)\geq \underset{\theta\in\mathbb{R}}\sup      N(\Re( e^{i\theta}C))=w_N(C).$$ Hence the first result is proved.\\
  Now,
  \begin{eqnarray*}
   w_{(N,e)}(B,C)&=&\underset{|\lambda_1|^2+\lambda_2|^2\leq1}\sup \underset{\theta\in\mathbb{R}}\sup     N\left(\Re \left(e^{i\theta}(\lambda_1B+\lambda_2C)\right)\right)\\
   &=&\underset{|\lambda_1|^2+\lambda_2|^2\leq1}\sup \underset{\theta\in\mathbb{R}}\sup      N(\Re( e^{i\theta}(\lambda_1B))+\Re( e^{i\theta}(\lambda_2C)))\\
   &\leq& \underset{|\lambda_1|^2+\lambda_2|^2\leq1}\sup\left\lbrace \underset{\theta\in\mathbb{R}}\sup      N(\Re (e^{i\theta}(\lambda_1B)))+\underset{\theta\in\mathbb{R}}\sup      N(\Re( e^{i\theta}(\lambda_2C)))\right\rbrace\\
   &=&\underset{|\lambda_1|^2+\lambda_2|^2\leq1}\sup\left\lbrace w_N(\lambda_1B)+ w_N(\lambda_2C)\right\rbrace\\
   &=&\underset{|\lambda_1|^2+\lambda_2|^2\leq1}\sup\left\lbrace |\lambda_1|w_N(B)+ |\lambda_2| w_N(C)\right\rbrace\\
   &=&\sqrt{w_N^2(B)+w_N^2(C)}.
   \end{eqnarray*}
    Hence the theorem is proved.  	
  \end{proof}
  It is easy to proof from the definition of $w_N( . )$ that $w_N(T)=N(T)$, if $T\in \mathbb{B}(\mathscr{H})$ is self adjoint.\\
  The following corollary is an immediate consequence of the Theorem \ref{thh11}.
  \begin{cor}
   Let  $ B,C\in\mathbb{B}(\mathscr{H})$ be any two self adjoint operator. Then $$  \max\left\lbrace N(B), N(C)\right\rbrace \leq w_{(N,e)}(B ,C )\leq\sqrt{N^2(B)+N^2(C)}.$$
  \end{cor}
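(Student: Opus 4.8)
The plan is to reduce the corollary directly to Theorem \ref{thh11}, using the identity $w_N(T) = N(T)$ for self-adjoint $T$ that was flagged (but not verified) in the text just before the statement. So the first step I would carry out is to establish that identity carefully. For a self-adjoint operator $T$, I compute the real part of $e^{i\theta}T$: since $T^* = T$, we have $\Re(e^{i\theta}T) = \tfrac{1}{2}(e^{i\theta}T + e^{-i\theta}T^*) = \tfrac{1}{2}(e^{i\theta}+e^{-i\theta})T = (\cos\theta)\,T$. Because $N(\cdot)$ is a norm, homogeneity then gives $N(\Re(e^{i\theta}T)) = |\cos\theta|\,N(T)$, and taking the supremum over $\theta\in\mathbb{R}$ in the definition \eqref{eqn11} yields $w_N(T) = \sup_{\theta}|\cos\theta|\,N(T) = N(T)$, the supremum being attained at $\theta = 0$.

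With this identity in hand, the corollary is immediate. Since $B$ and $C$ are both self-adjoint, I substitute $w_N(B) = N(B)$ and $w_N(C) = N(C)$ into the two bounds supplied by Theorem \ref{thh11}. Part (a) then reads $w_{(N,e)}(B,C) \geq \max\{w_N(B), w_N(C)\} = \max\{N(B), N(C)\}$, and part (b) reads $w_{(N,e)}(B,C) \leq \sqrt{w_N^2(B) + w_N^2(C)} = \sqrt{N^2(B) + N^2(C)}$. Concatenating these two inequalities produces exactly the claimed chain.

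There is no genuine obstacle here; essentially all of the content sits in verifying the self-adjoint identity $w_N(T) = N(T)$, and even that collapses to a one-line computation once $\Re(e^{i\theta}T)$ is simplified to $(\cos\theta)T$. The only point deserving minor care is invoking the homogeneity $N(\cos\theta\,T) = |\cos\theta|\,N(T)$, which is licensed by the standing hypothesis that $N(\cdot)$ is a norm on $\mathbb{B}(\mathscr{H})$.
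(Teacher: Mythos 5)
Your proposal is correct and follows exactly the paper's route: the paper states the identity $w_N(T)=N(T)$ for self-adjoint $T$ immediately before the corollary and declares the corollary an immediate consequence of Theorem \ref{thh11}, which is precisely your substitution of $w_N(B)=N(B)$ and $w_N(C)=N(C)$ into parts (a) and (b). Your explicit verification that $\Re(e^{i\theta}T)=(\cos\theta)T$ for self-adjoint $T$, hence $w_N(T)=\sup_\theta|\cos\theta|\,N(T)=N(T)$, simply fills in the one-line computation the paper leaves to the reader.
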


\begin{remark}
If we replace B,C by $\Re T,$ $\Im T$ respectively, for any $T\in\mathbb{B}(\mathscr{H})$ in the above Theorem \ref{thh11}, we have $$ \max\left\lbrace N(\Re T), N(\Im T)\right\rbrace\leq w_N(T)\leq\sqrt{N^2(\Re T)+N^2(\Im T)}.$$
\end{remark}
Next theorem state as follows:
\begin{theorem}
  Let  $ B,C\in\mathbb{B}(\mathscr{H}).$ then 

  (a) $\frac{1}{\sqrt{2}}\max\left\lbrace   w_N(B+C), w_N(B-C)\right\rbrace\leq w_{(N,e)}(B ,C )\leq 
\frac{1}{\sqrt{2}}  \sqrt{w_N^2(B+C)+w_N^2(B-C)}.$
 
  \label{thh12}\end{theorem}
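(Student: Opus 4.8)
The plan is to derive this theorem directly from the two bounds already established in Theorem \ref{thh11}, applied not to $(B,C)$ but to the pair $(B+C, B-C)$, and then to convert back using the scaling identity of Proposition \ref{prop1}(a). The crucial observation is that Proposition \ref{prop1}(a) supplies
$$w_{(N,e)}(B, C) = \frac{1}{\sqrt{2}}\, w_{(N,e)}(B+C, B-C),$$
so that controlling $w_{(N,e)}(B,C)$ is the same, up to the factor $\tfrac{1}{\sqrt{2}}$, as controlling the generalized Euclidean radius of the single pair $(B+C, B-C)$.

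First I would invoke Theorem \ref{thh11} with $(B+C, B-C)$ substituted for $(B,C)$. Part (a) of that theorem yields at once
$$w_{(N,e)}(B+C, B-C) \geq \max\{w_N(B+C),\, w_N(B-C)\},$$
while part (b) gives
$$w_{(N,e)}(B+C, B-C) \leq \sqrt{w_N^2(B+C) + w_N^2(B-C)}.$$

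Next I would multiply each of these inequalities by $\tfrac{1}{\sqrt{2}}$ and replace the left-hand side $\tfrac{1}{\sqrt{2}}\,w_{(N,e)}(B+C, B-C)$ by $w_{(N,e)}(B,C)$ using Proposition \ref{prop1}(a). This produces
$$\frac{1}{\sqrt{2}}\max\{w_N(B+C), w_N(B-C)\} \leq w_{(N,e)}(B,C) \leq \frac{1}{\sqrt{2}}\sqrt{w_N^2(B+C) + w_N^2(B-C)},$$
which is exactly the asserted double inequality.

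There is essentially no analytic difficulty here: the whole content of the theorem is the recognition that it is the image of Theorem \ref{thh11} under the substitution $(B,C) \mapsto (B+C, B-C)$, mediated by the $\tfrac{1}{\sqrt{2}}$ factor from Proposition \ref{prop1}(a). The only point demanding care is the bookkeeping in applying Proposition \ref{prop1}(a) in the correct direction (so that the identity links $w_{(N,e)}(B,C)$ to $w_{(N,e)}(B+C,B-C)$ rather than the reverse); once that is settled, each bound follows in a single line.
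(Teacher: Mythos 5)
Your proof is correct and follows essentially the paper's approach: for the upper bound the paper does exactly what you do, substituting $(B+C,\,B-C)$ into Theorem \ref{thh11}(b) and converting back with Proposition \ref{prop1}(a). The only (cosmetic) difference is the lower bound, which the paper obtains directly from the definition by choosing $\lambda_1=\lambda_2=\tfrac{1}{\sqrt{2}}$ and $\lambda_1=-\lambda_2=\tfrac{1}{\sqrt{2}}$, whereas you get it by applying the same substitution to Theorem \ref{thh11}(a); both unwind to the same computation, and your treatment is the more uniform of the two.
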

\begin{proof}
 We have $$ w_{(N,e)}(B,C)=\underset{|\lambda_1|^2+\lambda_2|^2\leq1}\sup \underset{\theta\in\mathbb{R}}\sup     N\left(\Re \left(e^{i\theta}(\lambda_1B+\lambda_2C)\right)\right),$$ 
  for every   $\lambda_1,\lambda_2\in\mathbb{C}.$ Consider $\lambda_1=\frac{1}{\sqrt{2}}$ and $\lambda_2=\frac{1}{\sqrt{2}}$ we have that 
  $$ w_{(N,e)}(B,C)\geq \underset{\theta\in\mathbb{R}}\sup      N\left(\Re\left( e^{i\theta}\left(\frac{1}{\sqrt{2}}(B+C)\right)\right)\right)=\frac{1}{\sqrt{2}}w_N(B+C).$$ 
  Consider $\lambda_1=\frac{1}{\sqrt{2}}$ and $\lambda_2=\frac{-1}{\sqrt{2}}$ we have that 
  $$ w_{(N,e)}(B,C)\geq \underset{\theta\in\mathbb{R}}\sup      N\left(\Re\left( e^{i\theta}\left(\frac{1}{\sqrt{2}}(B-C)\right)\right)\right)=\frac{1}{\sqrt{2}}w_N(B-
  C).$$  Hence the first result is proved.\\
 Replacing $B$, $C$ by $B+C$ and $B-C$ respectively in Theorem \ref{thh11}(b) and using Proposition \ref{prop1} we get desire second inequality.
\end{proof}

\begin{cor}
   Let  $ B,C\in\mathbb{B}(\mathscr{H})$ be any two self adjoint operator. Then $$\frac{1}{\sqrt{2}}\max\left\lbrace   N(B+C), N(B-C)\right\rbrace\leq w_{(N,e)}(B ,C )\leq 
\frac{1}{\sqrt{2}}  \sqrt{N^2(B+C)+N^2(B-C)}.$$
  \end{cor}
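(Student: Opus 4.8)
The plan is to deduce this corollary directly from Theorem \ref{thh12} by exploiting the self-adjointness hypothesis. The key observation is that if $B$ and $C$ are self-adjoint, then both $B+C$ and $B-C$ are again self-adjoint, because the adjoint operation is additive and $(\pm C)^* = \pm C$. This lets me collapse every $w_N$ term appearing in the bounds of Theorem \ref{thh12} into the corresponding $N$ term.

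Concretely, I would invoke the elementary identity recorded just before Theorem \ref{thh12}, namely that $w_N(T) = N(T)$ whenever $T \in \mathbb{B}(\mathscr{H})$ is self-adjoint. Applying this with $T = B+C$ and then with $T = B-C$ gives $w_N(B+C) = N(B+C)$ and $w_N(B-C) = N(B-C)$.

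Substituting these two equalities into both the lower and the upper bound supplied by Theorem \ref{thh12} then yields at once
$$\frac{1}{\sqrt{2}}\max\left\lbrace N(B+C), N(B-C)\right\rbrace\leq w_{(N,e)}(B,C)\leq \frac{1}{\sqrt{2}}\sqrt{N^2(B+C)+N^2(B-C)},$$
which is precisely the asserted inequality.

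Since every step is a direct substitution, there is no genuine obstacle here. The only point that needs even a moment's care is the self-adjointness of $B \pm C$, and that is immediate from the additivity of the involution; once it is in place the corollary is a one-line consequence of Theorem \ref{thh12}.
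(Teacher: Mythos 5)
Your proof is correct and matches the paper's intended argument exactly: the corollary is stated as an immediate consequence of Theorem \ref{thh12} together with the observation (recorded after Theorem \ref{thh11}) that $w_N(T)=N(T)$ for self-adjoint $T$, applied to the self-adjoint operators $B+C$ and $B-C$. Nothing is missing.
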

  
  Now, if we consider $B= T$ and $C= T^*$ in Theorem \ref{thh12}, we get the following inequality.
\begin{cor}\label{cor1}
	Let $T\in \mathbb{B}(\mathscr{H}),$ then
 	\begin{eqnarray*}
		 \frac{1}{2}\max\left\lbrace   w_N(T+T^*), w_N(T-T^*)\right\rbrace\leq w_N(T) )\leq 
\frac{1}{2}  \sqrt{w_N^2(T+T^*)+w_N^2(T-T^*)}.
	\end{eqnarray*}
\end{cor}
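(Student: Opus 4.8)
The plan is to specialize Theorem \ref{thh12} to the pair $(B,C)=(T,T^*)$ and then convert the resulting bounds on $w_{(N,e)}(T,T^*)$ into bounds on $w_N(T)$ by means of the identity recorded in Proposition \ref{prop1}(b). This is fundamentally a substitution argument, so the work lies in bookkeeping the normalizing constants rather than in any new estimate.

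First I would set $B=T$ and $C=T^*$ directly in Theorem \ref{thh12}, which immediately yields
$$\frac{1}{\sqrt{2}}\max\left\{w_N(T+T^*),w_N(T-T^*)\right\}\leq w_{(N,e)}(T,T^*)\leq\frac{1}{\sqrt{2}}\sqrt{w_N^2(T+T^*)+w_N^2(T-T^*)}.$$
Next I would invoke Proposition \ref{prop1}(b), which asserts $\frac{1}{\sqrt{2}}\,w_{(N,e)}(B,B^*)=w_N(B)$ for every $B\in\mathbb{B}(\mathscr{H})$, equivalently $w_{(N,e)}(B,B^*)=\sqrt{2}\,w_N(B)$. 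Taking $B=T$ gives $w_{(N,e)}(T,T^*)=\sqrt{2}\,w_N(T)$. Substituting this value for the middle term of the displayed chain and then dividing every part of the inequality by $\sqrt{2}$ produces exactly the claimed double inequality, with the constant $\tfrac12$ appearing on both sides.

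I do not anticipate any genuine obstacle, since both ingredients are already established earlier in the paper. The one point requiring care is to read off the correct normalizing factor from Proposition \ref{prop1}(b): one must use the relation in the form $w_{(N,e)}(T,T^*)=\sqrt{2}\,w_N(T)$ so that the factor of $\sqrt{2}$ supplied by the proposition cancels against the $\frac{1}{\sqrt{2}}$ coming from Theorem \ref{thh12}, leaving $\tfrac12$. Any mis-tracking of these two factors would alter the final constant, so verifying that they cancel correctly is the only place where the argument could go wrong.
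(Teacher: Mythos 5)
Your proposal is correct and is exactly the paper's (implicit) argument: the paper derives this corollary precisely by setting $B=T$, $C=T^*$ in Theorem \ref{thh12} and using the identity $w_{(N,e)}(T,T^*)=\sqrt{2}\,w_N(T)$ from Proposition \ref{prop1}(b), so the $\frac{1}{\sqrt{2}}$ and $\sqrt{2}$ factors cancel to give the constant $\frac12$. Your bookkeeping of these factors is accurate, and no further justification is needed.
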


\begin{remark}
If we replace B,C by $\Re T,$ $\Im T$ respectively, for any $T\in\mathbb{B}(\mathscr{H})$ in the above Theorem \ref{thh12}, we have $$\frac{1}{\sqrt{2}}\max\left\lbrace   N(\Re T+\Im T), N(\Re T-\Im T)\right\rbrace\leq w_N(T )\leq 
\frac{1}{\sqrt{2}}  \sqrt{N^2(\Re T+\Im T)+N^2(\Re T-\Im T)}.$$
\end{remark}

Next lower bound for $w_{(N,e)}(B,C)$ reads as follows.

\begin{theorem}

Let $ B,C\in\mathbb{B}(\mathscr{H})$, then $$\frac{1}{2}w_N(B+e^{i\theta }C)+\frac12|w_N(B)-w_N(C)|\leq w_{(N,e)}(B,C),$$ holds for all $\theta\in\mathbb{R}.$
\label{thh13}\end{theorem}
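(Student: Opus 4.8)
The plan is to derive the bound directly from the crude lower estimate $w_{(N,e)}(B,C) \geq \max\{w_N(B), w_N(C)\}$ already established in Theorem \ref{thh11}(a), and then to sharpen it into the two-term form by exploiting the elementary real-number identity $\max\{a,b\} = \tfrac{a+b}{2} + \tfrac{|a-b|}{2}$. This identity is the key bridge: it converts the maximum into an averaged term, which will absorb $\tfrac12 w_N(B+e^{i\theta}C)$, plus precisely the desired absolute-difference term $\tfrac12|w_N(B)-w_N(C)|$.

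First I would record the two norm properties of $w_N(\cdot)$ that are available from the discussion of \cite{AK2019}: it is absolutely homogeneous and subadditive. Applying these to $B+e^{i\theta}C$ gives, for every $\theta\in\mathbb{R}$,
$$ w_N\left(B+e^{i\theta}C\right)\leq w_N(B)+w_N\left(e^{i\theta}C\right)=w_N(B)+w_N(C), $$
where the phase disappears because $|e^{i\theta}|=1$. Dividing by $2$ yields $\tfrac12\big(w_N(B)+w_N(C)\big)\geq \tfrac12 w_N(B+e^{i\theta}C)$.

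Next I would combine the max-splitting identity with this estimate:
$$ \max\{w_N(B),w_N(C)\}=\frac{w_N(B)+w_N(C)}{2}+\frac{|w_N(B)-w_N(C)|}{2}\geq \frac12 w_N\left(B+e^{i\theta}C\right)+\frac12\big|w_N(B)-w_N(C)\big|. $$
Invoking Theorem \ref{thh11}(a), namely $w_{(N,e)}(B,C)\geq\max\{w_N(B),w_N(C)\}$, then chains the whole argument together and produces the claimed inequality for every $\theta\in\mathbb{R}$.

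I do not expect any serious obstacle: the proof is a short chain of inequalities rather than a computation. The only two points deserving care are the correct use of absolute homogeneity of $w_N$ to neutralize the phase factor $e^{i\theta}$ (so that $w_N(e^{i\theta}C)=w_N(C)$ uniformly in $\theta$), and the recognition that the $\max$-splitting identity is exactly what upgrades the one-term bound of Theorem \ref{thh11}(a) into the sharper statement being proved.
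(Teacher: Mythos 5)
Your proof is correct and is essentially identical to the paper's: both start from Theorem \ref{thh11}(a), split the maximum via $\max\{a,b\}=\tfrac{a+b}{2}+\tfrac{|a-b|}{2}$, and then apply subadditivity and absolute homogeneity of $w_N$ to bound $\tfrac12\left(w_N(B)+w_N(C)\right)$ below by $\tfrac12 w_N\left(B+e^{i\theta}C\right)$. Your write-up simply makes explicit the norm properties of $w_N$ that the paper uses silently.
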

\begin{proof}

It follows from Theorem \ref{thh11} that, \begin{eqnarray*}
w_{(N,e)}(B,C)&\geq&\max\left\lbrace w_N(B), w_N(C)\right\rbrace\\
&=&\frac12\left(w_N(B)+ w_N(C)\right)+\frac12|w_N(B)-w_N(C)|\\
&\geq&\frac12 w_N(B+e^{i\theta} C)+\frac12|w_N(B)-w_N(C)|
\end{eqnarray*}
\end{proof}

\begin{remark}
(i) Clearly, from the bound in Theorem \ref{thh13} we say that if $w_{(N,e)}(B,C)=\frac12 w_N(B+e^{i\theta} C)$ then $w_N(B)=w_N(C).$ By considering $C=B$ we conclude that converse is not true.\\
(ii) Replacing $B$ by $\Re(T)$ and $C$ by $\Im(T)$ in  Theorem \ref{thh13} we get the following lower bound for the numerical radius of $T\in \mathbb{B}(\mathscr{H})$:
\begin{eqnarray*}
 w_N(T) \geq 
 \frac{1}{2}N(\Re(T)+e^{i\theta}\Im(T))+\frac12|N(\Re T)-N(\Im T))|. 
\end{eqnarray*}
(iii)Replacing $B$ by $T$ and $C$ by $T^*$ in  Theorem \ref{thh13} we get the following lower bound for the numerical radius of $T\in \mathbb{B}(\mathscr{H})$:\begin{eqnarray}
w_N(T) \geq\frac{1}{2\sqrt{2}}w_N(T+e^{i\theta}T^*).\label{eqn14}\end{eqnarray} 
\end{remark}

We next obtain the following inequality.\\
To proof our next theorem we need following definition.\\
\begin{definition}
A norm $N( . )$ on $\mathbb{B}(\mathscr{H})$ is an algebra norm if
$$ N (AB) \leq N (A)N (B),$$ for every $A,B\in\mathbb{B}(\mathscr{H}).$
\end{definition}

\begin{theorem}\label{thh15}
Let $N( . )$ is a algebra norm, self adjoint and $ B,C\in\mathbb{B}(\mathscr{H})$, then $$\frac18N(C^*C+B^*B)+\frac12\max\left\lbrace w_N(B),w_N(C)\right\rbrace|w_N(B+C)-w_N(B-C)|\leq w_{(N,e)}^2(B,C).$$
\end{theorem}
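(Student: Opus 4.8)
The plan is to bound $w_{(N,e)}^2(B,C)$ from below by squaring a suitable lower bound from the earlier results and then splitting the square into a ``main'' term and a ``cross'' term. The natural starting point is Theorem \ref{thh11}(a), which gives $w_{(N,e)}(B,C)\geq\max\{w_N(B),w_N(C)\}$, together with the refined lower bound in Theorem \ref{thh13}. Writing $M=\max\{w_N(B),w_N(C)\}$ and using Theorem \ref{thh13} in the form $w_{(N,e)}(B,C)\geq\tfrac12 w_N(B+e^{i\theta}C)+\tfrac12|w_N(B)-w_N(C)|$, the idea is to select $\theta$ optimally — in particular $\theta=0$ and $\theta=\pi$ give control of $w_N(B+C)$ and $w_N(B-C)$ — so that the cross term $|w_N(B+C)-w_N(B-C)|$ appears naturally when comparing the two choices of sign.

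The key algebraic step I would carry out is the following. From $w_{(N,e)}(B,C)\geq M$ and the Theorem \ref{thh13} bound, multiply two such lower bounds or square one of them so that the product $M\cdot|w_N(B+C)-w_N(B-C)|$ is produced; this is exactly the second summand in the claimed inequality, up to the factor $\tfrac12$. First I would establish
\begin{eqnarray*}
w_{(N,e)}^2(B,C)\geq \tfrac14\bigl(w_N^2(B)+w_N^2(C)\bigr)+\tfrac12\, M\,\bigl|w_N(B+C)-w_N(B-C)\bigr|,
\end{eqnarray*}
by writing $w_{(N,e)}^2\geq w_{(N,e)}\cdot M$ and feeding in a two-sided estimate for $w_{(N,e)}$ obtained by averaging the Theorem \ref{thh13} inequality over the two antipodal choices of $\theta$.

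The remaining step is to replace the term $\tfrac14(w_N^2(B)+w_N^2(C))$ by the operator quantity $\tfrac18 N(C^*C+B^*B)$. Here I would invoke the hypotheses that $N(\cdot)$ is self-adjoint and an algebra norm: using submultiplicativity $N(B^*B)\leq N(B^*)N(B)=N(B)^2$ (self-adjointness gives $N(B^*)=N(B)$), together with the standard estimate $w_N(B)\geq\tfrac12 N(B)$ recorded after \eqref{eqn11} (so that $w_N^2(B)\geq\tfrac14 N(B)^2\geq\tfrac14 N(B^*B)$), one gets $\tfrac14(w_N^2(B)+w_N^2(C))\geq\tfrac{1}{16}(N(B^*B)+N(C^*C))\geq\tfrac1{16}N(B^*B+C^*C)$ after one application of the triangle inequality. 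Comparing constants, I expect the intended route actually uses $w_N(B)\geq\tfrac12 N(B)$ only once and routes the factor through the product with $M$, so the precise bookkeeping of the constant $\tfrac18$ versus $\tfrac1{16}$ is where I would be most careful.

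\textbf{Main obstacle.} The delicate point is matching the constant $\tfrac18$ in front of $N(C^*C+B^*B)$: the crude chain above tends to lose a factor of $2$, so the proof must avoid bounding both $w_N(B)$ and $w_N(C)$ simultaneously by $\tfrac12 N(\cdot)$ and instead combine the $\max$-term $M$ with a single self-adjoint/algebra-norm estimate, using subadditivity of $N$ on $B^*B+C^*C$ at exactly the right moment. I would therefore treat the cross term and the $N(C^*C+B^*B)$ term through a common factor of $M=\max\{w_N(B),w_N(C)\}$, extracting $N(C^*C+B^*B)$ from $M^2$ via $M^2\geq w_N(B)w_N(C)\geq\tfrac14 N(B)N(C)$ and $N(B)N(C)\geq N(B^*B)^{1/2}N(C^*C)^{1/2}$, which keeps the constants tight enough to reach $\tfrac18$.
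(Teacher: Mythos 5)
Your proposal contains two genuine gaps, and in both cases the fix you suggest is the part that fails. First, the cross term: multiplying the bound $w_{(N,e)}(B,C)\geq M$ by the Theorem \ref{thh13} estimates at $\theta=0,\pi$ yields at best
\[
w_{(N,e)}^2(B,C)\;\geq\; M\left(\tfrac12\max\left\lbrace w_N(B+C),w_N(B-C)\right\rbrace+\tfrac12\left|w_N(B)-w_N(C)\right|\right),
\]
and since $\tfrac12\max\{a,b\}=\tfrac14(a+b)+\tfrac14|a-b|$, the coefficient of $M\,|w_N(B+C)-w_N(B-C)|$ this produces is $\tfrac14$, not the required $\tfrac12$. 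The paper gets $\tfrac12$ by a different mechanism: it squares Theorem \ref{thh12}, writes $\max\{w_N^2(B+C),w_N^2(B-C)\}$ as $\tfrac12(\text{sum}+|\text{difference}|)$, and factors the difference of squares, so the coefficient of $|w_N(B+C)-w_N(B-C)|$ is $\tfrac14\left(w_N(B+C)+w_N(B-C)\right)$; the triangle inequality gives $w_N(B+C)+w_N(B-C)\geq 2w_N(B)$ (and $\geq 2w_N(C)$ after swapping $B\leftrightarrow C$), whence the factor $\tfrac12\max\{w_N(B),w_N(C)\}$. Any route that replaces the factor $w_N(B+C)+w_N(B-C)$ by $M$ \emph{before} squaring, as yours does, irrecoverably loses this factor of $2$.

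Second, and more seriously, your mechanism for reaching $\tfrac18 N(B^*B+C^*C)$ is invalid. You correctly observe that the chain $\tfrac14\left(w_N^2(B)+w_N^2(C)\right)\geq\tfrac1{16}\left(N(B^*B)+N(C^*C)\right)\geq\tfrac1{16}N(B^*B+C^*C)$ only gives $\tfrac1{16}$, but the proposed repair via $M^2\geq\tfrac14 N(B)N(C)\geq \tfrac14 N(B^*B)^{1/2}N(C^*C)^{1/2}$ is a dead end: a geometric mean can never dominate (any positive multiple of) $N(B^*B+C^*C)$ — take $C=0$, where the left-hand side vanishes while $N(B^*B)>0$ — and AM--GM runs in the wrong direction. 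The paper recovers the missing factor of $2$ by never decoupling $B$ from $C$: it keeps the terms $N^2(B+C)$ and $N^2(B-C)$, converts each by the algebra-norm/self-adjointness step $N^2(X)=N(X)N(X^*)\geq N(XX^*)$, and only then applies subadditivity together with the identity $(B+C)(B+C)^*+(B-C)(B-C)^*=2\left(BB^*+CC^*\right)$, so that $\tfrac1{16}\cdot 2=\tfrac18$ appears automatically. (Note the paper's proof in fact produces $N(BB^*+CC^*)$ rather than the $N(C^*C+B^*B)$ of the statement — an inconsistency in the paper, but independent of the issues above.) These two devices — the difference-of-squares factorization with the triangle inequality, and the parallelogram-type identity — are the actual content of the proof, and both are absent from your outline.
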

\begin{proof}
It follows from Theorem \ref{thh12} that
\begin{eqnarray*}
w_{(N,e)}^2(B ,C )&\geq&\frac{1}{2}\max\left\lbrace   w_N^2(B+C), w_N^2(B-C)\right\rbrace\\
&=&\frac14\left(w_N^2(B+C)+ w_N^2(B-C)+|w_N^2(B+C)- w_N^2(B-C)|\right)\\
&\geq&\frac{1}{16}\left(N^2(B+C)+ N^2(B-C)\right)\\
&&+\frac14\left(w_N(B+C)+ w_N(B-C)\right)|w_N(B+C)- w_N(B-C)|\\
&&(\textit{ because $w_N(B)\geq\frac12 N(B)$})\\
&\geq& \frac{1}{16}\left(N((B+C)(B+C)^*)+ N((B-C)(B-C)^*)\right)\\
&&+\frac14\left(w_N(B+C)+ w_N(B-C)\right)|w_N(B+C)- w_N(B-C)|\\
&&(\textit{ because $N(.)$ is algebra norm on $\mathbb{B}(\mathscr{H})$})\\
&\geq& \frac{1}{16}\left(N((B+C)(B+C)^*)+ N((B-C)(B-C)^*)\right)\\
&&+\frac14 w_N((B+C)+(B-C)) |w_N(B+C)- w_N(B-C)|\\
&\geq&\frac{1}{16}\left(N((B+C)(B+C)^*+(B-C)(B-C)^*\right)\\
&&+\frac12w_N(B)|w_N(B+C)- w_N(B-C)|\\
&=&\frac{1}{8}N(BB^*+CC^*)+\frac12w_N(B)|w_N(B+C)- w_N(B-C)|.
\end{eqnarray*}
Therefore, \begin{eqnarray}\label{eqn15}
w_{(N,e)}^2(B ,C )\geq\frac{1}{8}N(BB^*+CC^*)+\frac12w_N(B)|w_N(B+C)- w_N(B-C)|.\end{eqnarray}
Now interchanging $B$ and $C$ in (\ref{eqn15}), we have that\begin{eqnarray}\label{eqn16}
w_{(N,e)}^2(B ,C )\geq\frac{1}{8}N(BB^*+CC^*)+\frac12w_N(C)|w_N(B+C)- w_N(B-C)|.\end{eqnarray}  
Therefore, the desire inequality follows by combining the inequalities in (\ref{eqn15}) and (\ref{eqn16}).
\end{proof}

Following Theorem \ref{thh15}, $w_{(N,e)}^2(B,C)=\frac18N(BB^*+CC^*)$ implies $w_N(B+C)=w_N(B-C).$ But, by considering $C=0$, we conclude that the converse part is not true.
The following corollary is an immediate consequence of Theorem \ref{thh15} assuming $B$ and $C$ to be self adjoint operators.
\begin{cor}
Let $N( . )$ is a algebra norm, self adjoint and $ B,C\in\mathbb{B}(\mathscr{H})$ be self adjoint, then $$\frac18N(C^2+B^2)+\frac12\max\left\lbrace N(B),N(C)\right\rbrace|N(B+C)-N(B-C)|\leq w_{(N,e)}^2(B,C).$$
\label{cor12}\end{cor}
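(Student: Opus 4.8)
The plan is to obtain this corollary as a direct specialization of Theorem \ref{thh15} to self-adjoint $B$ and $C$, the point being that under self-adjointness every occurrence of the generalized numerical radius $w_N(\cdot)$ collapses to the underlying norm $N(\cdot)$. The single fact I would use repeatedly is the identity $w_N(T)=N(T)$ for self-adjoint $T$, which is recorded in the excerpt just before Theorem \ref{thh11}. The hypotheses on $N(\cdot)$ (algebra norm, self-adjoint) carry over verbatim from Theorem \ref{thh15}, so nothing needs to be re-verified there.

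First I would confirm that each operator appearing in the three terms of Theorem \ref{thh15} is itself self-adjoint. Since $B=B^*$ and $C=C^*$, the combinations $B+C$ and $B-C$ are again self-adjoint, and moreover $B^*B=B^2$ and $C^*C=C^2$, so that $C^*C+B^*B=C^2+B^2$. This is the only structural observation the argument relies on.

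Next I would substitute term by term into the inequality of Theorem \ref{thh15}. The first summand $\frac18 N(C^*C+B^*B)$ becomes $\frac18 N(C^2+B^2)$; in the second summand $\max\{w_N(B),w_N(C)\}$ becomes $\max\{N(B),N(C)\}$ by self-adjointness of $B$ and $C$, while $|w_N(B+C)-w_N(B-C)|$ becomes $|N(B+C)-N(B-C)|$ by self-adjointness of $B\pm C$. Collecting these substitutions yields exactly the claimed lower bound for $w_{(N,e)}^2(B,C)$. Because the statement is a pure specialization, there is essentially no obstacle; the only step deserving a moment's attention is the explicit check that $B\pm C$ are self-adjoint, so that the identity $w_N=N$ may be applied to them and not merely to $B$ and $C$ separately.
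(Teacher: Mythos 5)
Your proof is correct and is exactly the paper's route: the paper states this corollary as an immediate consequence of Theorem \ref{thh15} for self-adjoint $B$ and $C$, using the identity $w_N(T)=N(T)$ for self-adjoint $T$ recorded before Theorem \ref{thh11}. Your explicit checks that $B\pm C$ are self-adjoint and that $B^*B=B^2$, $C^*C=C^2$ are precisely the (unstated) details the paper leaves to the reader.
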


In particular, by considering $B=\Re T$ and $C=\Im T$ in Corollary \ref{cor12} we obtain the following upper bounds for $w_N(T).$

\begin{cor}
Let $N( . )$ is a algebra norm, self adjoint and $ T\in\mathbb{B}(\mathscr{H})$, then $$\frac{1}{16} N(T^*T+TT^*)+\frac12\max\left\lbrace N(\Re T),N(\Im T)\right\rbrace|N(\Re T + \Im T)-N(\Re T-\Im T)|\leq w_N^2(T).$$
\end{cor}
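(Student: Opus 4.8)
The plan is to obtain this corollary as a direct specialization of Corollary~\ref{cor12}, substituting $B = \Re T$ and $C = \Im T$. First I would note that both $\Re T = \tfrac12(T+T^*)$ and $\Im T = \tfrac{1}{2i}(T-T^*)$ are self-adjoint, so the hypotheses of Corollary~\ref{cor12} are satisfied. The entire argument then reduces to two identifications: rewriting the quadratic term $N(C^2+B^2)$ in terms of $T$, and rewriting the right-hand side $w_{(N,e)}^2(B,C)$ in terms of $w_N(T)$.

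For the quadratic term, the key computation is the algebraic identity $(\Re T)^2 + (\Im T)^2 = \tfrac{1}{2}(T^*T + TT^*)$, which follows by expanding the two squares and collecting terms; the cross contributions $T^2$ and $(T^*)^2$ cancel, while the mixed products $T^*T$ and $TT^*$ reinforce. Applying the positive homogeneity of $N(\cdot)$ then gives $N\big((\Re T)^2 + (\Im T)^2\big) = \tfrac12 N(T^*T + TT^*)$, so that $\tfrac18 N(C^2+B^2) = \tfrac{1}{16} N(T^*T + TT^*)$, matching the first term of the claimed bound.

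For the right-hand side, I would invoke Proposition~\ref{prop1}(b), which asserts $w_{(N,e)}(\Re B, \Im B) = w_N(B)$ for every $B \in \mathbb{B}(\mathscr{H})$. Taking $B = T$ yields $w_{(N,e)}(\Re T, \Im T) = w_N(T)$, hence $w_{(N,e)}^2(B,C) = w_N^2(T)$. The middle term of Corollary~\ref{cor12} is already expressed through $B = \Re T$ and $C = \Im T$ and requires no further transformation. Combining these three observations reproduces the stated inequality verbatim.

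Since each step is either a routine operator-algebra identity or a direct appeal to a previously established result, there is no substantial obstacle. The only point demanding care is the sign bookkeeping in the expansion of $(\Im T)^2$, where the factor $1/(2i)^2 = -1/4$ must be tracked correctly so that the $T^2$ and $(T^*)^2$ terms cancel rather than add; everything else is immediate.
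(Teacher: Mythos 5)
Your proposal is correct and matches the paper's own route exactly: the paper obtains this corollary by substituting $B=\Re T$ and $C=\Im T$ into Corollary~\ref{cor12}, using the identity $(\Re T)^2+(\Im T)^2=\tfrac12(T^*T+TT^*)$ together with Proposition~\ref{prop1}(b) to identify $w_{(N,e)}(\Re T,\Im T)=w_N(T)$, just as you do.
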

Now, if we consider $B=T$ and $C=T^*$ in Theorem \ref{thh15}, we get the following inequality.
\begin{cor}
Let $N( . )$ is a algebra norm, self adjoint and $ T\in\mathbb{B}(\mathscr{H})$, then $$\frac{1}{16} N(T^*T+TT^*)+\frac12w_N(T)|N(\Re T)-N(\Im T)|\leq w_N^2(T).$$
\end{cor}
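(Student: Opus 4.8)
The plan is to specialize Theorem~\ref{thh15} to the pair $B=T$ and $C=T^*$ and then reduce every term to an expression in $w_N(T)$, $N(\Re T)$ and $N(\Im T)$. The only tools required are the three facts collected in the introduction: that $w_N$ is self-adjoint, i.e. $w_N(T)=w_N(T^*)$; that $w_N$ is homogeneous, being a norm; and that $w_N(A)=N(A)$ whenever $A$ is self-adjoint. Theorem~\ref{thh15} applies because $N(\cdot)$ is assumed to be an algebra norm that is self-adjoint.

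First I would handle the right-hand side. By Proposition~\ref{prop1}(b) we have $w_{(N,e)}(T,T^*)=\sqrt{2}\,w_N(T)$, hence $w_{(N,e)}^2(T,T^*)=2w_N^2(T)$. This is precisely the factor that will convert the coefficients $\frac18$ and $\frac12$ of Theorem~\ref{thh15} into the $\frac{1}{16}$ and $\frac12$ appearing in the claim after a final division by $2$.

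Next I would simplify the left-hand side term by term. Substituting $C=T^*$ gives $C^*C+B^*B=TT^*+T^*T$, which is $N(T^*T+TT^*)$ after reordering. Since $w_N$ is self-adjoint, $w_N(T^*)=w_N(T)$, so $\max\{w_N(B),w_N(C)\}=w_N(T)$.

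The main (and only mildly delicate) step is the evaluation of $w_N(B+C)=w_N(T+T^*)$ and $w_N(B-C)=w_N(T-T^*)$. Here I would use that $T+T^*=2\Re T$ is self-adjoint, whence $w_N(T+T^*)=N(T+T^*)=2N(\Re T)$, while $T-T^*=2i\,\Im T$ is \emph{skew}-adjoint; the correct move is to factor out the scalar $2i$ and invoke homogeneity of $w_N$ together with the self-adjointness of $\Im T$, giving $w_N(T-T^*)=2\,w_N(i\,\Im T)=2\,w_N(\Im T)=2N(\Im T)$. Consequently $|w_N(T+T^*)-w_N(T-T^*)|=2\,|N(\Re T)-N(\Im T)|$. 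Assembling these substitutions in Theorem~\ref{thh15} yields $\frac18 N(T^*T+TT^*)+w_N(T)\,|N(\Re T)-N(\Im T)|\le 2w_N^2(T)$, and dividing through by $2$ produces the stated inequality. The one point to watch is the temptation to treat $T-T^*$ as self-adjoint: it is skew-adjoint, and the factor $i$ must be pulled out via the homogeneity of $w_N$ before applying $w_N(\Im T)=N(\Im T)$.
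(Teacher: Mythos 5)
Your proposal is correct and follows exactly the route the paper intends: substituting $B=T$, $C=T^*$ into Theorem~\ref{thh15}, using $w_{(N,e)}(T,T^*)=\sqrt{2}\,w_N(T)$ from Proposition~\ref{prop1}(b), the self-adjointness of $w_N$, and $w_N(A)=N(A)$ for self-adjoint $A$ applied to $T+T^*=2\Re T$ and (after pulling out the scalar $2i$ by homogeneity) to $\Im T$. The paper states this corollary with no written details, so your careful reduction—including the correct handling of the skew-adjoint $T-T^*$—is precisely the omitted verification.
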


Replacing $B$ by $B+C$ and $C$ by $B-C$ in Theorem \ref{thh15} and using the Proposition \ref{prop1} we have the following bounds for $w_{(N,e)}(B,C)$.
\begin{cor}
Let $N( . )$ is a algebra norm, self adjoint and $ B,C\in\mathbb{B}(\mathscr{H})$, then $$\frac18N(C^*C+B^*B)+\frac12\max\left\lbrace w_N(B+C),w_N(B-C)\right\rbrace|w_N(B)-w_N(C)|\leq w_{(N,e)}^2(B,C).$$
\end{cor}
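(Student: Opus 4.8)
The plan is to derive this bound by substituting into Theorem \ref{thh15}, exactly as the preceding remark indicates. Since $B+C$ and $B-C$ again lie in $\mathbb{B}(\mathscr{H})$, and the hypotheses on $N(\cdot)$ (algebra norm and self-adjoint) do not refer to the specific operators, I would apply Theorem \ref{thh15} with $B$ replaced by $B+C$ and $C$ replaced by $B-C$. This immediately produces an inequality whose right-hand side is $w_{(N,e)}^2(B+C,B-C)$ and whose left-hand side involves $N\big((B-C)^*(B-C)+(B+C)^*(B+C)\big)$ together with the numerical-radius factors built from $(B+C)\pm(B-C)$.

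The next step is to simplify the three composite expressions. Expanding the operator products, the cross terms cancel and one obtains
$$(B+C)^*(B+C)+(B-C)^*(B-C)=2\,(B^*B+C^*C),$$
so, by homogeneity of $N(\cdot)$, the first term equals $\tfrac14 N(B^*B+C^*C)$. For the numerical-radius factors I would use $(B+C)+(B-C)=2B$ and $(B+C)-(B-C)=2C$, so that absolute homogeneity of $w_N(\cdot)$ turns the difference into $2\,|w_N(B)-w_N(C)|$ and collapses the second term to $\max\{w_N(B+C),w_N(B-C)\}\,|w_N(B)-w_N(C)|$. Finally, Proposition \ref{prop1}(a) yields $w_{(N,e)}(B+C,B-C)=\sqrt{2}\,w_{(N,e)}(B,C)$, so the right-hand side becomes $2\,w_{(N,e)}^2(B,C)$.

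Collecting these and dividing through by $2$ produces exactly
$$\frac18 N(C^*C+B^*B)+\frac12\max\{w_N(B+C),w_N(B-C)\}\,|w_N(B)-w_N(C)|\leq w_{(N,e)}^2(B,C),$$
which is the assertion. I do not expect a genuine obstacle; the only care needed is in the constant bookkeeping, namely verifying that the factor $\tfrac18$ emerges correctly once the step $N\big(2(\cdot)\big)=2N(\cdot)$ is balanced against the factor $2$ arising from squaring Proposition \ref{prop1}(a), and confirming that the cross terms in $(B\pm C)^*(B\pm C)$ indeed cancel so that the first term reduces cleanly to $B^*B+C^*C$.
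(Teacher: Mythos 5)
Your proposal is correct and follows exactly the paper's route: the paper derives this corollary precisely by substituting $B+C$ for $B$ and $B-C$ for $C$ in Theorem \ref{thh15} and invoking Proposition \ref{prop1}(a), which is what you do. Your constant bookkeeping is also right: the substituted left-hand side is $\frac14 N(B^*B+C^*C)+\max\{w_N(B+C),w_N(B-C)\}\,|w_N(B)-w_N(C)|$, the right-hand side is $2\,w_{(N,e)}^2(B,C)$, and dividing by $2$ gives the stated bound.
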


\section{ Hilbert-Schmidt Euclidean operator radius inequalities}

In this section, we study the norm $w_{(N,e)}(  . ,  . )$ when $N( . )$ is the Hilbert-Schmidt norm. Recall that an operator $T\in\mathbb{B}(\mathscr{H})$ is said to belong to the  Hilbert-Schmidt class $C_2$ if $\sum_{i,j=1}^{\infty}|\langle Te_i,e_j\rangle|^2=\sum_{i=1}^{\infty}\|Te_i\|^2$ is finite for some (hence, for any) orthonormal basis $\left\lbrace e_i\right\rbrace_{1}^{\infty}$.  For $T \in C_2$, let $\|A\|_2=\left(\sum_{i=1}^{\infty}\|Te_i\|^2\right)^\frac12$ be the Hilbert-Schmidt norm of $T$. Note that for $T \in C_2$, $\|T\|_{2}^{2}=tr (T^*T)$. \\
When $N( . )$ is the Hilbert-Schmidt norm $\| \|_2$, the norm $w_N( . )$ is denote as $w_2( .)$ and the norm $w_{(N,e)}( . , . )$ is denoted by $w_{(2,e)}( . , . )$ and is defined by $$w_{(2,e)}( B, C )=\underset{|\lambda_1|^2+\lambda_2|^2\leq1}\sup \underset{\theta\in\mathbb{R}}\sup      \|\Re( e^{i\theta}(\lambda_1B+\lambda_2C))\|_2,$$ for every $B,C\in\mathbb{B}(\mathscr{H})$ and called it   Hilbert-Schmidt Euclidean operator radius. 

Now we proof some bounds of  Hilbert-Schmidt Euclidean operator radius. 

\begin{theorem}
Let $B,C\in\mathbb{B}(\mathscr{H})$. Then \begin{eqnarray*}
 w^2_{(2,e)}( B, C )&\geq&\frac14\left( |tr(B^2)+tr(C^2)+2tr(BC)|\right)\\
 &&+\frac14\left(\|B\|_{2}^{2}+\|C\|_{2}^{2}+ 2Re( tr(BC^*))\right),\end{eqnarray*} where $ Re(z)$ is the real part of $z\in\mathbb{C}.$
\label{thh16}\end{theorem}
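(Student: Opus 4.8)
The plan is to reduce the two-variable quantity to a single Hilbert--Schmidt numerical radius computation and then read off the bound from an exact trace identity. By \eqref{eqn12} we have $w_{(2,e)}(B,C)=\sup_{|\lambda_1|^2+|\lambda_2|^2\le 1} w_2(\lambda_1 B+\lambda_2 C)$, so the admissible choice $\lambda_1=\lambda_2=\tfrac{1}{\sqrt2}$ (which meets the constraint with equality) gives
$$w_{(2,e)}(B,C)\ge w_2\!\left(\tfrac{1}{\sqrt2}(B+C)\right)=\tfrac{1}{\sqrt2}\,w_2(B+C),$$
and hence $w_{(2,e)}^2(B,C)\ge \tfrac12\, w_2^2(B+C)$. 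It therefore suffices to evaluate $w_2^2(B+C)$ exactly.

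The heart of the argument is an exact trace formula for $w_2$. Writing $T=B+C$ and using that $\Re(e^{i\theta}T)$ is self adjoint, so that $\|\Re(e^{i\theta}T)\|_2^2=tr\big((\Re(e^{i\theta}T))^2\big)$, I would expand $\Re(e^{i\theta}T)=\tfrac12(e^{i\theta}T+e^{-i\theta}T^*)$ and square. Using cyclicity of the trace together with $tr(T^*T)=tr(TT^*)=\|T\|_2^2$ and $tr((T^*)^2)=\overline{tr(T^2)}$, the diagonal terms produce $2\|T\|_2^2$ while the off-diagonal terms combine to $e^{2i\theta}tr(T^2)+e^{-2i\theta}\overline{tr(T^2)}=2\,\mathrm{Re}\big(e^{2i\theta}tr(T^2)\big)$, so that
$$\|\Re(e^{i\theta}T)\|_2^2=\tfrac12\Big(\mathrm{Re}\big(e^{2i\theta}tr(T^2)\big)+\|T\|_2^2\Big).$$
Taking the supremum over $\theta\in\mathbb{R}$ and observing that $\sup_\theta \mathrm{Re}(e^{2i\theta}tr(T^2))=|tr(T^2)|$, I obtain the clean identity $w_2^2(T)=\tfrac12\big(|tr(T^2)|+\|T\|_2^2\big)$.

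Combining the two displays yields $w_{(2,e)}^2(B,C)\ge \tfrac14\big(|tr((B+C)^2)|+\|B+C\|_2^2\big)$, and the proof then closes with two elementary expansions: $tr((B+C)^2)=tr(B^2)+tr(C^2)+2tr(BC)$ (using $tr(CB)=tr(BC)$), and $\|B+C\|_2^2=tr\big((B+C)^*(B+C)\big)=\|B\|_2^2+\|C\|_2^2+2\,\mathrm{Re}(tr(BC^*))$ (using $tr(B^*C)+tr(C^*B)=2\,\mathrm{Re}(tr(BC^*))$). Substituting these gives exactly the asserted inequality.

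I expect the only genuine obstacle to be the exact evaluation of $w_2^2(T)$: one must correctly recognize that $tr((T^*)^2)$ is the complex conjugate of $tr(T^2)$, so that the cross terms assemble into $2\,\mathrm{Re}(e^{2i\theta}tr(T^2))$, and that the supremum over $\theta$ extracts precisely the modulus $|tr(T^2)|$; everything else is bookkeeping. I would also remark that the inequality loses information only at the single substitution $\lambda_1=\lambda_2=1/\sqrt2$, so that optimizing over all admissible pairs $(\lambda_1,\lambda_2)$ could in principle sharpen the bound, though this is not needed here.
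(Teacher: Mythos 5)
Your proof is correct and follows essentially the same route as the paper: both expand $\|\Re(e^{i\theta}\,\cdot\,)\|_2^2$ as a trace, use $\sup_{\theta}\mathrm{Re}\left(e^{2i\theta}z\right)=|z|$ to extract the modulus of the trace of the square, and obtain the bound from the admissible choice $\lambda_1=\lambda_2=\tfrac{1}{\sqrt{2}}$. The only difference is organizational: you specialize to $\lambda_1=\lambda_2=\tfrac{1}{\sqrt{2}}$ first and route through the exact identity $w_2^2(T)=\tfrac12\left(\|T\|_2^2+|tr(T^2)|\right)$ (the identity from \cite{AK2019} that the paper records at the end of Section 3), whereas the paper carries general $\lambda_1,\lambda_2$ through the whole expansion and only specializes in the final step.
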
 
\begin{proof}
\begin{eqnarray*}
&&w^2_{2,e}( B, C )\\
&=&\underset{|\lambda_1|^2+\lambda_2|^2\leq1}\sup \underset{\theta\in\mathbb{R}}\sup      \|\Re e^{i\theta}(\lambda_1B+\lambda_2C)\|_{2}^{2}\\
&=&\underset{|\lambda_1|^2+\lambda_2|^2\leq1}\sup \underset{\theta\in\mathbb{R}}\sup\hspace{0.2cm}   tr\left(\Re( e^{i\theta}(\lambda_1B+\lambda_2C))^2\right)\\
&=&\underset{|\lambda_1|^2+\lambda_2|^2\leq1}\sup \underset{\theta\in\mathbb{R}}\sup\hspace{0.2cm} tr [ \frac12\Re(e^{2i\theta}(\lambda_1B+\lambda_2C)^2)\\
&&+\frac14\left(|\lambda_1|^2BB^*+|\lambda_1|^2 B^*B+|\lambda_2|^2 CC^*+|\lambda_2|^2 C^*C\right)\\
&&+\frac14\left(2\Re (\lambda_1\bar{\lambda_2} BC^*)+2\Re (\lambda_1\bar{\lambda_2} C^*B)\right)]\\
&=&\underset{|\lambda_1|^2+\lambda_2|^2\leq1}\sup \underset{\theta\in\mathbb{R}}\sup  \frac12\Re(e^{2i\theta}tr(\lambda_1B+\lambda_2C)^2)\\&&+\frac12\left(|\lambda_1|^2\|B\|_{2}^{2}+|\lambda_2|^2\|C\|_{2}^{2}\right)+  \frac12tr(\Re(\lambda_1\bar{\lambda_2}
BC^*)+\frac12tr(\Re(\lambda_1\bar{\lambda_2}
C^*B)\\
&=&\underset{|\lambda_1|^2+\lambda_2|^2\leq1}\sup   \frac12|tr(\lambda_1B+\lambda_2C)^2|\\&&+\frac12\left(|\lambda_1|^2\|B\|_{2}^{2}+|\lambda_2|^2\|C\|_{2}^{2}\right)+  \frac12tr(\Re(\lambda_1\bar{\lambda_2}
BC^*)+\frac12tr(\Re(\lambda_1\bar{\lambda_2}
C^*B)\\
&\geq& \frac14\left( |tr((B+C)^2)|+\|B\|_{2}^{2}+\|C\|_{2}^{2}+ Re( tr(BC^*))+Re( tr(C^*B))\right),\\
&&\textit{( where $ Re(z)$ is the real part of $z\in\mathbb{C}$)}.\\
&=&\frac14\left( |tr(B^2)+tr(C^2)+2tr(BC)|+\|B\|_{2}^{2}+\|C\|_{2}^{2}+2 Re( tr(BC^*))\right)
\end{eqnarray*}
\end{proof}

 The following corollaries are the immediate consequence of the above Theorem \ref{thh16}. 
 
 Replacing $B$ by $B+C$ and $C$ by $B-C$ in Theorem \ref{thh16} and using the Proposition \ref{prop1} we have the following bounds for $w_{2,e}(B,C)$.
\begin{cor}
Let $B,C\in\mathbb{B}(\mathscr{H})$. Then \begin{eqnarray*}
 w^2_{(2,e)}( B, C )&\geq&\frac18\left( |tr((B+C)^2)+tr((B-C)^2)+2tr((B+C)(B-C))|\right)\\
 &&+\frac18\left(\|B+C\|_{2}^{2}+\|B-C\|_{2}^{2}\right)\\
 &&+\frac14 Re( tr((B+C)(B-C)^*)),\end{eqnarray*} where $ Re(z)$ is the real part of $z\in\mathbb{C}.$
\end{cor}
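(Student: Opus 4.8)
The plan is to obtain this bound from Theorem \ref{thh16} by a change of variables combined with the scaling identity of Proposition \ref{prop1}(a), so that essentially no new work beyond a substitution is needed. First I would replace $B$ by $B+C$ and $C$ by $B-C$ throughout the statement of Theorem \ref{thh16}. Writing out the right-hand side under these substitutions gives
\begin{align*}
w^2_{(2,e)}(B+C,B-C)
&\geq \frac14\big|tr((B+C)^2)+tr((B-C)^2)+2tr((B+C)(B-C))\big| \\
&\quad +\frac14\Big(\|B+C\|_2^2+\|B-C\|_2^2+2Re\big(tr((B+C)(B-C)^*)\big)\Big).
\end{align*}

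The next step is to convert the left-hand side back to the pair $(B,C)$. By Proposition \ref{prop1}(a) specialized to $N(\cdot)=\|\cdot\|_2$, one has $w_{(2,e)}(B,C)=\frac{1}{\sqrt{2}}\,w_{(2,e)}(B+C,B-C)$, and squaring gives $w^2_{(2,e)}(B+C,B-C)=2\,w^2_{(2,e)}(B,C)$. Substituting this into the displayed inequality and dividing through by $2$ replaces each leading factor $\frac14$ by $\frac18$, while the real-part term, whose coefficient in the substituted form of Theorem \ref{thh16} is $\frac14\cdot 2=\frac12$, becomes $\frac14$ after halving. This is exactly the inequality asserted in the corollary.

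Since both Theorem \ref{thh16} and Proposition \ref{prop1}(a) are already available, there is no genuine analytic obstacle; the only point requiring care is the bookkeeping of the constants. In particular, one must keep the cross term $2\,Re(tr((B+C)(B-C)^*))$ separate from the two squared Hilbert-Schmidt norms $\|B+C\|_2^2$ and $\|B-C\|_2^2$, so that after halving it retains the coefficient $\frac14$ recorded in the statement rather than collapsing to $\frac18$.
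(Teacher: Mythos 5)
Your proposal is correct and follows exactly the paper's intended argument: substitute $B\mapsto B+C$, $C\mapsto B-C$ in Theorem \ref{thh16}, then use Proposition \ref{prop1}(a) to write $w^2_{(2,e)}(B+C,B-C)=2\,w^2_{(2,e)}(B,C)$ and divide by $2$. Your bookkeeping of the constants (the $\frac18$ factors and the cross term retaining coefficient $\frac14$) matches the stated corollary precisely.
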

Now, if we consider $B=T$ and $C=T$ in Theorem \ref{thh16}, and using the Proposition \ref{prop1} we have the following bounds for $w_2(T)$.
\begin{cor}
Let $T\in\mathbb{B}(\mathscr{H})$. Then
 $$ w^2_2(T)\geq\frac12 \|\ T\|^2_2+\frac12|tr(T^2)|.$$
\label{cor31}\end{cor}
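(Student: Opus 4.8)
The plan is to specialize Theorem \ref{thh16} to the diagonal case $B=C=T$ and then convert the resulting lower bound on $w_{(2,e)}(T,T)$ into a lower bound on $w_2(T)$ by means of Proposition \ref{prop1}(c).

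First I would substitute $B=C=T$ into the right-hand side of Theorem \ref{thh16}. The first trace expression collapses as $tr(B^2)+tr(C^2)+2\,tr(BC)=tr(T^2)+tr(T^2)+2\,tr(T^2)=4\,tr(T^2)$, so that $|tr(B^2)+tr(C^2)+2\,tr(BC)|=4|tr(T^2)|$. For the second group of terms I compute $\|B\|_2^2+\|C\|_2^2+2Re(tr(BC^*))=2\|T\|_2^2+2Re(tr(TT^*))$. Here I would use the elementary identity $tr(TT^*)=tr(T^*T)=\|T\|_2^2$, a nonnegative real number, so that $Re(tr(TT^*))=\|T\|_2^2$ and this group equals $4\|T\|_2^2$. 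Inserting both computations into Theorem \ref{thh16} gives $w^2_{(2,e)}(T,T)\geq |tr(T^2)|+\|T\|_2^2$.

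Next I would invoke Proposition \ref{prop1}(c), which asserts $w_{(N,e)}(B,B)=\sqrt{2}\,w_N(B)$. Taking $N(\cdot)=\|\cdot\|_2$ and $B=T$ yields $w_{(2,e)}(T,T)=\sqrt{2}\,w_2(T)$, and hence $w^2_{(2,e)}(T,T)=2\,w_2^2(T)$. Substituting this into the inequality of the previous paragraph and dividing through by $2$ produces exactly $w_2^2(T)\geq\frac12\|T\|_2^2+\frac12|tr(T^2)|$, as claimed.

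The argument is essentially a direct substitution, so I do not expect a genuine obstacle. The only computational point requiring a little care is recognizing that $tr(TT^*)$ is real and equal to $\|T\|_2^2$, which removes the real-part operation and lets the cross term combine cleanly with the two norm-squared terms to give the factor $4$. Conceptually, the step worth flagging is the application of Proposition \ref{prop1}(c): it is precisely this identity that allows passage from the pair radius $w_{(2,e)}(T,T)$ back to the single-operator radius $w_2(T)$, and without it the corollary would not reduce to Theorem \ref{thh16}.
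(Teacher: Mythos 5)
Your proposal is correct and follows exactly the paper's route: specializing Theorem \ref{thh16} to $B=C=T$ and using Proposition \ref{prop1}(c) to identify $w_{(2,e)}(T,T)=\sqrt{2}\,w_2(T)$. The paper states this corollary with precisely that one-line justification, and your computations (the factor $4|tr(T^2)|$ and $tr(TT^*)=\|T\|_2^2$ giving $4\|T\|_2^2$) fill in the details correctly.
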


\begin{theorem}
Let $B,C\in\mathbb{B}(\mathscr{H})$. Then $$w^2_{(2,e)}( B, C )\leq\frac12\left(\max\left\lbrace |tr(B^2)|,|tr(C^2)|\right\rbrace+|tr(BC)|
+\max\left\lbrace\|B\|_{2}^{2},\|C\|_{2}^{2}\right\rbrace
+ |tr(
BC^*)|\right).$$ 
\label{thh17}\end{theorem}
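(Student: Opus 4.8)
The plan is to reuse the identity for $w^2_{(2,e)}(B,C)$ obtained in the course of proving Theorem \ref{thh16}, but now to estimate each of its terms from \emph{above} rather than discarding them. Setting $A=\lambda_1 B+\lambda_2 C$ and taking the supremum over $\theta$ of the self-adjoint quantity $\|\Re(e^{i\theta}A)\|_2^2=\tfrac12\Re\big(e^{2i\theta}tr(A^2)\big)+\tfrac12\|A\|_2^2$ (using $tr((A^*)^2)=\overline{tr(A^2)}$ and cyclicity of the trace), the same computation that appears there yields
\begin{align*}
w^2_{(2,e)}(B,C)=\underset{|\lambda_1|^2+|\lambda_2|^2\leq1}{\sup}\Big[&\tfrac12\big|tr\big((\lambda_1 B+\lambda_2 C)^2\big)\big|+\tfrac12\big(|\lambda_1|^2\|B\|_2^2+|\lambda_2|^2\|C\|_2^2\big)\\
&+\Re\big(\lambda_1\bar{\lambda_2}\, tr(BC^*)\big)\Big],
\end{align*}
where the two cross terms $\tfrac12 tr\big(\Re(\lambda_1\bar{\lambda_2}BC^*)\big)+\tfrac12 tr\big(\Re(\lambda_1\bar{\lambda_2}C^*B)\big)$ have been combined into $\Re\big(\lambda_1\bar{\lambda_2}\,tr(BC^*)\big)$ via $tr(\Re X)=\Re(tr X)$ and $tr(C^*B)=tr(BC^*)$.

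First I would expand $tr\big((\lambda_1 B+\lambda_2 C)^2\big)=\lambda_1^2\,tr(B^2)+\lambda_2^2\,tr(C^2)+2\lambda_1\lambda_2\,tr(BC)$, using linearity and $tr(BC)=tr(CB)$, and apply the triangle inequality to get
\[
\big|tr\big((\lambda_1 B+\lambda_2 C)^2\big)\big|\leq |\lambda_1|^2|tr(B^2)|+|\lambda_2|^2|tr(C^2)|+2|\lambda_1||\lambda_2||tr(BC)|.
\]
For the remaining summand I would use $\Re\big(\lambda_1\bar{\lambda_2}\,tr(BC^*)\big)\leq|\lambda_1||\lambda_2||tr(BC^*)|$. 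Writing $a=|\lambda_1|$, $b=|\lambda_2|$ with $a,b\geq0$ and $a^2+b^2\leq1$, the bracketed quantity is then dominated by a nonnegative quadratic expression in $a,b$, and the decisive step is its homogenisation: for each diagonal pair I use $a^2 X+b^2 Y\leq \max\{X,Y\}(a^2+b^2)\leq\max\{X,Y\}$, and for each cross term I use the elementary bound $2ab\leq a^2+b^2\leq1$. Applying this with $(X,Y)=(|tr(B^2)|,|tr(C^2)|)$ and $(X,Y)=(\|B\|_2^2,\|C\|_2^2)$, and to the two cross contributions $|tr(BC)|$ and $|tr(BC^*)|$, the dependence on $\lambda_1,\lambda_2$ disappears and the supremum collapses to
\[
w^2_{(2,e)}(B,C)\leq\tfrac12\big(\max\{|tr(B^2)|,|tr(C^2)|\}+|tr(BC)|+\max\{\|B\|_2^2,\|C\|_2^2\}+|tr(BC^*)|\big),
\]
as claimed.

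I do not anticipate a serious obstacle here; the only point that needs care is to resist the temptation to sharpen the estimate of the quadratic form through its largest eigenvalue, which would introduce a square-root term and overshoot the stated bound. The correct (and deliberately weaker) route is the separation $\max\{X_1+Y_1,X_2+Y_2\}\leq\max\{X_1,X_2\}+\max\{Y_1,Y_2\}$ combined with $2ab\leq1$, and one must keep the two cross-term magnitudes $|tr(BC)|$ and $|tr(BC^*)|$ distinct throughout, since they arise from the quadratic trace term and the Hilbert--Schmidt term respectively.
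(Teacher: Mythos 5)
Your proposal is correct and follows essentially the same route as the paper: both start from the identity $w^2_{(2,e)}(B,C)=\sup_{|\lambda_1|^2+|\lambda_2|^2\leq1}\bigl[\tfrac12|tr((\lambda_1B+\lambda_2C)^2)|+\tfrac12(|\lambda_1|^2\|B\|_2^2+|\lambda_2|^2\|C\|_2^2)+\Re(\lambda_1\bar{\lambda_2}\,tr(BC^*))\bigr]$ extracted from the proof of Theorem \ref{thh16}, expand the quadratic trace term, and then apply exactly the estimates $a^2X+b^2Y\leq\max\{X,Y\}$ and $2ab\leq a^2+b^2\leq1$ to eliminate the dependence on $\lambda_1,\lambda_2$. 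Your early consolidation of the two cross terms via $tr(C^*B)=tr(BC^*)$ is a cosmetic tidying of the paper's argument, not a different method.
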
 
\begin{proof}
It follows from Theorem \ref{thh16} that
\begin{eqnarray*}
&&w^2_{(2,e)}( B, C )\\
&=&\underset{|\lambda_1|^2+\lambda_2|^2\leq1}\sup[\frac12 |tr(\lambda_1B+\lambda_2C)^2|+\frac12\left(|\lambda_1|^2\|B\|_{2}^{2}+|\lambda_2|^2\|C\|_{2}^{2}\right)\\
&&+  \frac12tr(\Re(\lambda_1\bar{\lambda_2}
BC^*)+\frac12tr(\Re(\lambda_1\bar{\lambda_2}
C^*B)]\\
&\leq&\underset{|\lambda_1|^2+\lambda_2|^2\leq1}\sup[\frac12 |tr(\lambda_1B+\lambda_2C)^2|+\frac12\left(|\lambda_1|^2+|\lambda_2|^2\right)\max\left(\|B\|_{2}^{2},\|C\|_{2}^{2}\right)\\
&&+  \frac12Re(\lambda_1\bar{\lambda_2}tr(
BC^*)+\frac12 Re(\lambda_1\bar{\lambda_2}
tr(C^*B))]\\
&\leq&\underset{|\lambda_1|^2+\lambda_2|^2\leq1}\sup[\frac12|tr(\lambda_1B+\lambda_2C)^2|+\frac12\max\left(\|B\|_{2}^{2},\|C\|_{2}^{2}\right)\\
&&+  \frac12|\lambda_1||\lambda_2|\left(|tr(
BC^*)|+|tr(C^*B)|\right)]\\
&\leq&\frac12\underset{|\lambda_1|^2+\lambda_2|^2\leq1}\sup |tr(\lambda_1B+\lambda_2C)^2|+\frac12\max\left(\|B\|_{2}^{2},\|C\|_{2}^{2}\right)\\
&&+  \frac12\underset{|\lambda_1|^2+\lambda_2|^2\leq1}\sup(|\lambda_1||\lambda_2|)\left(|tr(
BC^*)|+|tr(C^*B)|\right)\\
&=&\frac12\underset{|\lambda_1|^2+\lambda_2|^2\leq1}\sup|tr(\lambda_1B+\lambda_2C)^2|+\frac12\max\left(\|B\|_{2}^{2},\|C\|_{2}^{2}\right)
+  \frac12|tr(
BC^*)|.\\
&\leq&\frac12\underset{|\lambda_1|^2+\lambda_2|^2\leq1}\sup|tr((\lambda_1)^2B^2)+tr((\lambda_2)^2C^2)+2\lambda_1\lambda_2tr(BC)|\\
&&+\frac12\max\left(\|B\|_{2}^{2},\|C\|_{2}^{2}\right)
+  \frac12|tr(
BC^*)|.\\
&\leq&\frac12\underset{|\lambda_1|^2+\lambda_2|^2\leq1}\sup\left(\max\left\lbrace |tr(B^2)|,|tr(C^2)|\right\rbrace+2|\lambda_1\lambda_2||tr(BC)|\right)\\
&&+\frac12\max\left(\|B\|_{2}^{2},\|C\|_{2}^{2}\right)
+  \frac12|tr(
BC^*)|.\\
&=&\frac12\left(\max\left\lbrace |tr(B^2)|,|tr(C^2)|\right\rbrace+|tr(BC)|
+\max\left\lbrace\|B\|_{2}^{2},\|C\|_{2}^{2}\right\rbrace
+ |tr(
BC^*)|\right).
\end{eqnarray*}
\end{proof}
The following corollaries are the immidiate consequence of the above Theorem \ref{thh17}. 
 
Now, if we consider $B=T$ and $C=T$ in Theorem \ref{thh17}, and using the Proposition \ref{prop1} we have the following bounds for $w_2(T)$.
\begin{cor}
Let $T\in\mathbb{B}(\mathscr{H})$. Then
 $$ w^2_2(T)\leq\frac12 \|\ T\|^2_2+\frac12|tr(T^2)|.$$
\label{cor32}\end{cor}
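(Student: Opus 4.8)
The plan is to specialize Theorem \ref{thh17} to the diagonal case $B=C=T$ and then convert the resulting bound on $w_{(2,e)}(T,T)$ into a bound on $w_2(T)$ via Proposition \ref{prop1}(c). First I would substitute $B=C=T$ directly into the inequality of Theorem \ref{thh17}. Each of the four terms on the right-hand side collapses: the two maxima become $\max\{|tr(T^2)|,|tr(T^2)|\}=|tr(T^2)|$ and $\max\{\|T\|_2^2,\|T\|_2^2\}=\|T\|_2^2$; the cross term $|tr(BC)|$ becomes $|tr(T^2)|$; and the final term $|tr(BC^*)|$ becomes $|tr(TT^*)|$. Using the identity $tr(TT^*)=tr(T^*T)=\|T\|_2^2$ recorded at the start of this section, the right-hand side simplifies to $\frac12\bigl(2|tr(T^2)|+2\|T\|_2^2\bigr)=|tr(T^2)|+\|T\|_2^2$.

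Next I would treat the left-hand side. By Proposition \ref{prop1}(c) applied with $N(\cdot)=\|\cdot\|_2$, we have $w_{(2,e)}(T,T)=\sqrt{2}\,w_2(T)$, and hence $w_{(2,e)}^2(T,T)=2\,w_2^2(T)$. Combining this with the simplified right-hand side gives $2\,w_2^2(T)\le |tr(T^2)|+\|T\|_2^2$, and dividing through by $2$ yields the claimed bound $w_2^2(T)\le \frac12\|T\|_2^2+\frac12|tr(T^2)|$.

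There is essentially no obstacle here: the argument is a direct substitution followed by an elementary algebraic simplification, with no new estimate required. The only point demanding minor care is the factor $\sqrt{2}$ arising from Proposition \ref{prop1}(c), which must be squared correctly when passing from $w_{(2,e)}$ to $w_2$; neglecting it would produce an incorrect constant. I would also note in passing that this upper bound matches exactly the lower bound of Corollary \ref{cor31}, so the two together in fact pin down the value $w_2^2(T)=\frac12\|T\|_2^2+\frac12|tr(T^2)|$.
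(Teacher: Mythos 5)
Your proof is correct and follows exactly the paper's route: the paper likewise obtains this corollary by setting $B=T$ and $C=T$ in Theorem \ref{thh17} and invoking Proposition \ref{prop1}(c) to identify $w_{(2,e)}^2(T,T)=2\,w_2^2(T)$. The paper only states this substitution without carrying it out; your explicit simplification of the four terms (including $|tr(TT^*)|=\|T\|_2^2$ and the correct squaring of the factor $\sqrt{2}$) fills in that step accurately.
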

It follows from Corollary \ref{cor31} and Corollary \ref{cor32} that  $$ w^2_2(T)=\frac12 \|\ T\|^2_2+\frac12|tr(T^2)|,$$ which is also proved in \cite{AK2019}.\\

	\noindent \textbf{Competing Interests.}\\
On behalf of all authors, the corresponding author  declares that there is no financial or non-financial interests that are directly or indirectly related to the work submitted for publication.

\bibliographystyle{amsplain}

\end{document}